\documentclass[12pt]{article}

\usepackage{calrsfs}

\usepackage{setspace}
\usepackage[all]{xy}
\usepackage{amssymb,amsmath,amsthm}
\usepackage{stmaryrd}

\newcommand{\lmod}{\mbox{-}\mathrm{mod}\!}

\newcommand{\Hom}{\mathrm{Hom}}
\newcommand{\GL}{\mathrm{GL}}
\newcommand{\K}{\mathbb{K}}

\newcommand{\Z}{\mathbb{Z}}
\newcommand{\N}{\mathbb{N}}
\newcommand{\Q}{\mathbb{Q}}
\newcommand{\F}{\mathbb{F}}

\newcommand{\im}{\operatorname{Im}}

\newcommand{\X}{\mathbb{X}}
\newcommand{\diag}{\operatorname{diag}}
\newcommand{\wt}{\operatorname{wt}}

\newcommand{\End}{\mathrm{End}}
\newcommand{\Tor}{\mathrm{Tor}}

\newcommand{\id}{\mathrm{id}}

\newcommand{\involution}{\mathcal{J}}

\newcommand{\dominate}{\ensuremath{\trianglerighteq}}
\newcommand{\strdominate}{\ensuremath{\triangleright}}
\newcommand{\tableau}{\mathcal{T}}
\newcommand{\rs}{\mathfrak{r}}
\newcommand{\bfr}{\mathbf{r}}
\newcommand{\bfn}{\mathbf{n}}
\renewcommand{\sf}{\mathfrak{S}}

\newtheorem{theorem}{Theorem}[section]
\newtheorem{proposition}[theorem]{Proposition}
\newtheorem{corollary}[theorem]{Corollary}

\newtheorem{remark}[theorem]{Remark}
\theoremstyle{definition}
\newtheorem{definition}[theorem]{Definition}
\theoremstyle{remark}
\author{Ana Paula Santana and Ivan Yudin \thanks{The second author's work is supported by the FCT Grant
SFRH/BPD/31788/2006. Financial support by CMUC/FCT gratefully acknowledged by both
authors.}\\
Department of Mathematics \\
University of Coimbra\\
Coimbra, Portugal
}

\title{Characteristic-free resolutions of Weyl and Specht modules}

\begin{document}
\maketitle
\begin{abstract}
	We construct  explicit resolutions of Weyl modules by divided powers and
	of co-Specht modules by permutational modules. 
	We also prove a conjecture by Boltje-Hartmann~\cite{boltje} on
	resolutions of co-Specht modules.
\end{abstract}
\section*{Introduction}

Schur algebras are fundamental tools in the representation theory of the general
linear group $\GL_n\left( R \right)$ and of the symmetric group. In fact,
over infinite fields, the category of homogeneous polynomial representations of
degree $r$ of $\GL_n\left( R \right)$ is equivalent to the category of finite-dimensional modules for the Schur algebra $S_R\left( n,r \right)$. If $r\le n$,
the use of the Schur functor (see~\cite[\S 6]{green}) allows us to relate these
categories to the category of finite-dimensional representations of the
symmetric group $\Sigma_r$. 

Introduced by I. Schur in his doctoral dissertation~\cite{schur1} in 1901, for
the field of complex numbers, Schur algebras were generalized for arbitrary
infinite fields by J.A.~Green in~\cite{green}. The Schur algebra $S_R\left( n,r
\right)$, for a commutative ring $R$ with identity, was introduced by K.~Akin
and D.~Buchsbaum in~\cite{ab2} and by Green in~\cite{greencombinatorics}.

If $R$ is a noetherian commutative ring, $S_R\left( n,r \right)$ is
quasi-hereditary. So it is natural to ask for the construction of projective
resolutions of Weyl modules, which are the standard modules in that case. 

In their work on characteristic-free representation theory of the general linear
group~\cite{abw2},~\cite{ab1},~\cite{ab2}, Akin, Buchsbaum, and Weyman study the
problem of constructing resolutions of Weyl modules in terms of direct sums of
tensor products of divided powers of $R^n$. Moreover, they ask for these
resolutions to be finite and universal (defined over the integers). 

This task was accomplished for Weyl modules associated with two-rowed partitions
in~\cite{ab1} and three-rowed (almost) skew-partitions in~\cite{rota}. Using
induction, and assuming these resolutions are known for m-rowed (almost)
skew-partitions, for all $m<n$, such resolutions are presented
in~\cite{rota} for all $n$-rowed (almost) skew-partitions. But, in general, no
explicit description of such complexes is known.  

In this paper we use the theory of Schur algebras to give an answer to the above
construction problem for an arbitrary partition. 

Denote by $\Lambda\left( n;r \right)$ (respectively $\Lambda^+\left( n;r
\right)$) the set of all compositions (respectively partitions) of $r$ into at
most $n$ parts. For each $\lambda\in \Lambda^+\left( n;r \right)$ write
$W^R_\lambda$ for the Weyl module over $S_R\left( n,r \right)$ associated to
$\lambda$. 
The Schur algebra $S_R\left( n,r \right)$ has a decomposition of the identity as
a sum of orthogonal idempotents
$\xi_\mu$, where $\mu\in \Lambda\left( n;r \right)$. The projective module
$S_R\left( n,r \right)\xi_\mu$ is isomorphic as a
$\GL_n\left( R \right)$-module to the tensor product of divided powers
$D_{\mu_1}\left( R^n \right)\otimes_R \dots \otimes_R D_{\mu_n}\left( R^n
\right)$. Hence, the construction of a universal projective resolution of the
Weyl module $W^R_\lambda$ in terms of direct sums of the projective modules
$S_R\left( n,r \right)\xi_{\mu}$ will give an answer to the problem posed by Akin
and Buchsbaum. 

The Borel-Schur algebra $S^+_R\left( n,r \right)$ is a subalgebra of $S_R\left(
n,r \right)$ introduced by Green in~\cite{greencombinatorics} (see
also~\cite{green2}). It is an algebra with interesting properties. All the
idempotents $\xi_\mu$ are elements of $S_R^+\left( n,r \right)$ and
we have
\begin{align*}
S_R\left(
n,r \right)\otimes_{S_R^+\left( n,r \right)} S^+_R\left( n,r \right)\xi_\mu\cong
S_R\left( n,r
\right)\xi_\mu.
\end{align*}
	Moreover, for every $\lambda\in \Lambda^+\left( n,r
\right)$ there exists a rank one $S^+_R\left( n,r \right)$-module $R_\lambda$
such that $S_R\left( n,r \right)\otimes_{S^+_R\left( n,r \right)} R_\lambda\cong
W^R_\lambda$.

Woodcock~\cite{woodcock} proved that if $R$ is an infinite field then the
modules $R_\lambda$, $\lambda\in \Lambda^+\left( n,r \right)$, are acyclic with
respect to the induction functor $S_R\left( n,r \right)\otimes_{S^+_R\left( n,r
\right)} - $. The first author proved in~\cite{aps} that the modules $S^+_R\left( n,r
\right)\xi_\mu$ are principal projective modules in this case.
Therefore, applying the induction functor to a projective resolution of $R_\lambda$ we
obtain a resolution 
of
$W^R_\lambda$ by direct sums of modules $S_R\left( n,r \right)\xi_\mu$.

In Theorem~\ref{woodcock} we show that the modules $R_\lambda$ are 
acyclic with respect to the induction functor $S_R\left( n,r
\right)\otimes_{S^+_R\left( n,r \right)} - $ in the case of an arbitrary commutative
ring $R$. Then we construct a universal resolution of $R_\lambda$
 by direct sums of modules $S^+_R\left(
n,r \right)\xi_\mu$, $\mu\in \Lambda\left( n;r \right)$. 
Applying the induction functor we obtain a universal resolution of the Weyl module
by direct sums of the modules $S_R\left( n,r \right)\xi_\mu$, $\mu\in \Lambda\left(
n;r \right)$.

For $\lambda\in \Lambda^+\left( n;r \right)$, a complex $\left(
\widetilde{C}^{\lambda}_k, k\ge -1 \right)$
was constructed in~\cite{boltje}. In this complex,  $\widetilde{C}^{\lambda}_{-1}$ is the co-Specht
module associated with $\lambda$ and  $\widetilde{C}^{\lambda}_k$ are
permutational modules over $\Sigma_r$ for $k\ge 0$. Boltje and Hartmann
conjectured that $\widetilde{C}^{\lambda}_*$ is exact and thus gives a
permutational resolution for the co-Specht module. 

In Theorem~\ref{thm:boltje} we will show that if we apply the Schur functor
 to our resolution of the Weyl module $S_R\left( n,r
\right)\otimes_{S^+_R\left( n,r \right)}R_\lambda$, we obtain
$\widetilde{C}^{\lambda}_*$. As the Schur functor is exact, this proves
Conjecture~3.4 in~\cite{boltje}.


It should be noted that several other results are known if we look for
resolutions of Weyl modules by divided powers in the case $R$ is a field. 
In \cite{z}, \cite{akin} and \cite{woodcock2} such resolutions are described if
$R$ is a field of characteristic zero. Their proofs use the BGG-resolution. Also in
this case, resolutions are given in~\cite{artale}, for three-rowed (almost)
skew-partitions using the technique developed in~\cite{rota}. 

If $R$ is a field of positive characteristic, projective resolutions of the
simple $S^+_R\left( n,r \right)$-modules $R_\lambda$ are constructed in~\cite{yudin1}, for the cases $n=2$ and $n=3$. Using the
induction functor, one gets resolutions of the corresponding Weyl modules by
divided powers for $n=2$, $3$. 

The present paper is organized as follows. 
In Section~\ref{combinatorics} we introduce the combinatorial notation. 
In Section~\ref{schur} we recall the definition of Schur algebra and give  
a new version of the formula for the product of two basis elements of $S_R\left(
n,r \right)$, which seems to be well suited to our work with divided powers. 

Section~\ref{bar_constr} is technical and is included for the convenience of the
reader. It explains what the reduced bar construction looks like for augmented
algebras in the monoidal category of $S$-bimodules, where $S$ is an arbitrary
commutative ring. 

Section~\ref{borel} is dedicated to the Borel-Schur subalgebra
$S^+_R\left( n,r \right)$. We  apply the reduced bar construction to obtain a
projective resolution for every rank one module $R_\mu$, $\mu\in\Lambda\left(
n;r \right)$. 

Section~\ref{induction} contains the main result (Theorem~\ref{woodcock}) of the
paper.

In Sections~\ref{symmetric} and~\ref{boltjehartmann} we explain how our
results prove Conjecture~3.4 in~\cite{boltje}. 

In an appendix we show that the Schur algebra definition used in this paper is
equivalent to the one given in~\cite{greencombinatorics}. Then we construct an
explicit isomorphism between $D_\lambda\left( R^n \right)$ and $S_R\left(
n,r \right)\xi_\lambda$, as this seems to be unavailable in published form. We also
recall the theory of divided powers. 

\section{Combinatorics}
\label{combinatorics}
In this section we collect the  combinatorial notation used in the paper. We will 
give general definitions which include as partial cases the usual tools in the
subject such as multi-indices, compositions, etc. 

We denote by $R$ a commutative ring with identity $e$,  and $n$ and $r$ are
arbitrary fixed positive integers. For any natural number $s$ we denote by
$\mathbf{s}$ the set $\left\{ 1,\dots,s \right\}$ and by $\Sigma_s$ the
symmetric group on $\mathbf{s}$. Given a finite set $X$, we write:
\begin{itemize}
	\item $\mu= \left( \mu_x \right)_{x\in X}$ and $|\mu| = \sum_{x\in
		X}\mu_x$, for each map $\mu\colon X\to \N_0$ given by $x\mapsto
		\mu_x$. 
	\item $\Lambda\left( X,r \right) := \left\{\, \mu\colon X\to \N_0
		\,\middle|\, \left|\mu\right| = r
		\right\}$.
	\item $\wt\left( u \right)\in \Lambda\left( X,r \right)$, for the map
		defined by 
		$$
		\wt\left( u \right)_x = \# \left\{\, s \,\middle|\,  u_s = x, \
		s= 1,\dots, r
		\right\},
		$$
		for each $u\in X^r$.
\end{itemize}
The symmetric group $\Sigma_r$ acts on the right of $X^r$ in the usual way: 
$$
\left( x_1,\dots,
x_r
\right)\sigma = \left( x_{\sigma\left( 1 \right)}, \dots, x_{\sigma\left( r
\right)} \right).
$$
Identifying $\wt\left( u \right)$ with the $\Sigma_r$-orbit
of $u\in X^r$, we can think of $\Lambda\left( X,r \right)$ as the set of
$\Sigma_r$-orbits on $X^r$. We will write $u\in \omega$ if $\wt\left( u
\right) = \omega$. 

	Next we consider several particular cases of the definitions given above. We
write $I\left( n,r \right)$ for $\bfn^r$. The elements of $I\left( n,r
\right)$ are called \emph{multi-indices} and will be usually denoted by the
letters $i$, $j$. We identify the sets $\left( \bfn\times \bfn
\right)^r$ and $I\left( n,r \right)\times I\left( n,r \right)$ via the map
$$
\left( \left( i_1,j_1 \right), \dots, \left( i_r,j_r \right) \right)\mapsto
\left( \left( i_1,\dots, i_r \right), \left( j_1,\dots, j_r \right) \right).
$$
Similarly $\left( \bfn\times \bfn\times \bfn  \right)^r$ will be identified with
$I\left( n,r \right)\times I\left( n,r \right)\times I\left( n,r \right)$. 

The sets $\Lambda\left( \bfn,r \right)$, $\Lambda\left( \bfn\times\bfn;r \right)$,
and $\Lambda\left( \bfn\times\bfn \times \bfn;r \right)$ will be denoted by
$\Lambda\left( n;r \right)$, $\Lambda\left( n,n;r \right)$, and $\Lambda\left(
n,n,n;r \right)$, respectively. We can think of the elements of
$\Lambda\left( n;r \right)$ as \emph{the compositions} of $r$ into at most $n$ parts,
and we will write $\Lambda^+\left( n;r \right)$ for those $\left(
\lambda_1,\dots, \lambda_n
\right)\in \Lambda\left( n;r \right)$ that verify $\lambda_1\ge \dots\ge
\lambda_n$ (the \emph{partitions} 
of $r$ into at most $n$ parts). The elements of $\Lambda\left( n,n;r \right)$
are functions from $\bfn\times \bfn$ to $\N_0$ and can be considered as
$n\times n$ matrices of non-negative integers $\left( \omega_{s,t}
\right)_{s,t=1}^n$ such that $\sum_{s,t=1}^n \omega_{st} = r$. Similarly, the
elements of $\Lambda\left( n,n,n;r \right)$ can be interpreted in terms of
$3$-dimensional $n\times n\times n$ tensors. 

Note that the weight function on $I\left( n,r \right)$ coincides with the one defined
in~\cite{green}. Now, given $\omega= \left( \omega_{st} \right)_{s,t=1}^n \in
\Lambda\left( n,n;r \right)$ and $\theta= \left( \theta_{s,t,q}
\right)_{s,t,q=1}^n \in \Lambda\left(
n,n,n;r \right)$ we will define $\omega^1$, $\omega^2\in \Lambda\left(
n;r \right)$ and $\theta^1$, $\theta^2$, $\theta^3\in \Lambda\left( n,n;r
\right)$ by 
\begin{align*}
	\left( \omega^1 \right)_t, &= \sum_{s=1}^n \omega_{st}, & \left( \omega^2
	\right)_s & = \sum_{t=1}^n \omega_{st}; \\ \left( \theta^1
	\right)_{tq} & = \sum_{s=1}^n \theta_{stq}, & \left( \theta^2
	\right)_{sq} & = \sum_{t=1}^n \theta_{stq}, & \left( \theta^3
	\right)_{st} & = \sum_{q=1}^n \theta_{stq}. 
\end{align*}
\begin{remark}
	\label{commutative}
	If $i$, $j$, $k\in I\left( n,r \right)$ then it can be seen from the
	definition of the weight function that
	\begin{align*}
		\wt\left( i,j \right)^1 &= \wt\left( j \right), & \wt\left( i,j
		\right)^2, & = \wt\left( i \right);\\
		\wt\left( i,j,k \right)^1 & = \wt\left( j,k \right), & \wt\left(
		i,j,k \right)^2 & = \wt\left( i,k \right), & \wt\left( i,j,k
		\right)^3 &= \wt\left( i,j \right). 
	\end{align*}
\end{remark}

\section{The Schur algebra $\mathbf{S_R\left( n,r \right)}$}
\label{schur}
In this section we recall the definition of the Schur algebra  $S_R\left( n,r
\right)$ over a commutative
ring and give a new version of the formula for the product of two basis elements of
this algebra (cf. \cite[(2.3.b)]{green} and \cite[(2.6)]{green2}). 

Let  $\left\{\, e_s \,\middle|\,1\le s\le n 
\right\}$ be the standard  basis of $R^n$. For every $i\in I\left( n,r \right)$
define $e_i = e_{i_1}\otimes \dots \otimes e_{i_r}$. Then 
$\left\{\, e_i\,\middle|\, i\in I(n,r) \right\}$ is a basis for $\left( R^n
\right)^{\otimes r}$  and $\End_R\left( \left( R^n \right)^{\otimes r} \right)$  
has basis
 $\left\{ \, e_{i,j}\,\middle|\, i,j\in I(n,r) \right\}$, where the map
$e_{i,j}$ is defined by 
$$
e_{i,j}e_k := \delta_{jk} e_i, \ i,j,k\in I(n,r).
$$
The action of $\Sigma_r$ on $I\left( n,r \right)$ extends on $\left( R^n
\right)^{\otimes r}$ to
  $e_i \sigma :=
  e_{i\sigma}$ and turns $\left( R^n \right)^{\otimes r}$ into a right $R \Sigma_r $-module.

\begin{definition}
	The Schur algebra $S_R\left( n,r \right)$ is the endomorphism algebra of
	$\left( R^n \right)^{\otimes r}$ in the category of $R \Sigma_r $-modules. 
\end{definition}
The action of $\Sigma_r$ on $\left( R^n \right)^{\otimes r}$ induces a
$\Sigma_r$-action
on $\End_R\left( \left( R^n \right)^{\otimes r} \right)$ by $\left( f\sigma \right)\left( v \right) := f\left(
v\sigma^{-1} \right)\sigma$. On the basis elements this action is 
$e_{i,j}\sigma = e_{i\sigma , j\sigma}$. 
We have, by  Lemma 2.4 in~\cite{feit}, $S_R\left( n,r \right) \cong \left(\End_R\left(
R^n \right)^{\otimes r}\right)^{\Sigma_r}$. 

For $\omega \in \Lambda\left( n,n;r \right)$ define $\xi_\omega\in S_R\left(
n,r \right)$ by
$$
\xi_{\omega} := \sum_{\left( i,j \right)\in \omega }e_{i,j}. 
$$
Since $\Lambda\left( n,n;r \right)$ is identified with the set of
$\Sigma_r$-orbits on $I\left( n,r \right)\times
I\left( n,r \right)$ via $\wt$,  the set  $\left\{\, \xi_{\omega} \,\middle|\, \omega\in \Lambda\left( n,n;r
\right) \right\}$ is a basis for $S_R\left( n,r \right)$. 

\begin{remark}
	The definition of Schur algebra we use is equivalent to the one given by
	J.A.~Green in~\cite{greencombinatorics} (cf. Theorem~\ref{equivalence}). 
	Note that Green writes $\xi_{i,j}$ where we have $\xi_{\wt\left( i,j
	\right)}$ for $i$, $j\in I\left( n,r \right)$. In the case $R$ is an
	infinite field, this definition of $S_R\left( n,r \right)$ is also
	equivalent to the one given in~\cite{green}.  
\end{remark}
For $\lambda\in \Lambda\left( n;r \right)$ we write $\xi_{\lambda}$ for
$\xi_{\diag\left( \lambda \right)}$. It is immediate from the definition, that
$\sum_{\lambda\in \Lambda\left( n;r \right)} \xi_{\lambda}$ is an orthogonal
idempotent decomposition of the identity of $S_R\left( n,r \right)$.

Next we
will deduce a product formula for any two basis elements of $S_R\left( n,r
\right)$. 
Let $\theta \in \Lambda\left( n,n,n;r \right)$. We write  $\left[ \theta
\right]\in \N_0$ for the product of binomial coefficients: 
$$
\prod_{s,t=1}^n \binom{\left( \theta^2 \right)_{st}}{\theta_{s1t},
\theta_{s2t}, \dots, \theta_{snt}}. 
$$
 
\begin{proposition}
	\label{mult}
	For any $\omega$, $\pi\in \Lambda\left( n,n;r \right)$ we have
	\begin{equation}	
		\label{eq:mult}
		\xi_{\omega}\xi_{\pi} = \sum_{\theta\in \Lambda\left( n,n,n;r
		\right):\
		\theta^3= \omega,\ \ \theta^1=\pi}
		\left[ \theta \right]\xi_{\theta^2}. 
	\end{equation}
\end{proposition}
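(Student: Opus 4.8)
The plan is to compute the coefficient of each matrix unit $e_{i,l}$ in the product $\xi_\omega\xi_\pi$ and then regroup these coefficients into the $\xi$-basis. Expanding both factors and using $e_{i,j}e_{k,l}=\delta_{jk}e_{i,l}$,
\[
	\xi_\omega\xi_\pi \;=\; \sum_{(i,j)\in\omega}\ \sum_{(k,l)\in\pi} e_{i,j}e_{k,l}
	\;=\; \sum_{i,l\in I(n,r)} N(i,l)\, e_{i,l},
	\qquad
	N(i,l) := \#\bigl\{\, j\in I(n,r) \,\bigm|\, \wt(i,j)=\omega,\ \wt(j,l)=\pi \,\bigr\}.
\]
So it suffices to prove that $N(i,l)=\sum_{\theta}\,[\theta]$, where $\theta$ runs over those elements of $\Lambda(n,n,n;r)$ with $\theta^3=\omega$, $\theta^1=\pi$ and $\theta^2=\wt(i,l)$; summing this identity over all pairs $(i,l)$ and collecting the matrix units $e_{i,l}$ with a fixed common value of $\wt(i,l)=\theta^2$ into the single basis element $\xi_{\theta^2}$ then gives exactly \eqref{eq:mult}.

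To establish the formula for $N(i,l)$, I would partition the set of $j$ counted by $N(i,l)$ according to the value $\theta:=\wt(i,j,l)\in\Lambda(n,n,n;r)$. By Remark~\ref{commutative} we have $\theta^3=\wt(i,j)=\omega$, $\theta^1=\wt(j,l)=\pi$ and $\theta^2=\wt(i,l)$, so every such $\theta$ lies in the index set above, and it remains to count, for each admissible $\theta$, the number of $j$ with $\wt(i,j,l)=\theta$. This is a block-counting argument: group the positions $m\in\{1,\dots,r\}$ by the pair $(i_m,l_m)$. For a fixed $(s,q)\in\bfn\times\bfn$ there are exactly $\wt(i,l)_{sq}=(\theta^2)_{sq}=\sum_{t}\theta_{stq}$ positions $m$ with $(i_m,l_m)=(s,q)$, and the condition $\wt(i,j,l)=\theta$ requires precisely $\theta_{stq}$ of them to receive the value $j_m=t$, for each $t\in\bfn$. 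The number of assignments on this block is therefore the multinomial coefficient $\binom{(\theta^2)_{sq}}{\theta_{s1q},\dots,\theta_{snq}}$, the choices on distinct blocks are independent, and multiplying over all $(s,q)$ yields $\prod_{s,q=1}^{n}\binom{(\theta^2)_{sq}}{\theta_{s1q},\dots,\theta_{snq}}=[\theta]$. In particular this count depends only on $\theta$, not on the chosen representative $(i,l)$ of its orbit, which is what makes the final regrouping legitimate.

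The opening expansion and the concluding reindexing are routine. The one step needing care is matching the block count to the definition of $[\theta]$: one must keep straight which of the three tensor slots of $\theta$ is contracted in each of $\theta^1,\theta^2,\theta^3$, and use Remark~\ref{commutative} to see that the triples actually contributing are exactly those with $\theta^3=\omega$ and $\theta^1=\pi$. This indexing bookkeeping is the only genuine obstacle; once it is set up correctly the identity follows from the elementary observation that distributing $j$-values over positions already marked by their $(i,l)$-value is governed by a product of multinomial coefficients.
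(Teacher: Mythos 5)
Your argument is correct and follows the same route as the paper: expand the product into a sum of matrix units $e_{i,l}$, partition the contributing triples $(i,j,l)$ by their weight $\theta\in\Lambda(n,n,n;r)$, invoke Remark~\ref{commutative} to identify the admissible $\theta$'s, and count the $j$'s block-by-block (your blocks indexed by $(i_m,l_m)=(s,q)$ are exactly the paper's sets $X_{sq}$) to obtain the product of multinomial coefficients $[\theta]$. The only differences are cosmetic (you write $l$ for the third index and phrase the result as computing $N(i,l)$ before regrouping, whereas the paper fixes $\theta$ first and then shows the inner sum is a multiple of $\xi_{\theta^2}$), but the content is identical.
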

\begin{proof}
	We have 
	\begin{align*}
		\xi_{\omega}\xi_{\pi} = \left( \sum_{\left( i,j \right)\in
		\omega}e_{i,j} \right) \left( \sum_{\left( l,k \right)\in
		\pi}e_{l,k} \right) = \sum_{\left( i,j,k \right):\ \left( i,j
		\right)\in \omega,\ \left( j,k \right)\in \pi} e_{i,k}. 
	\end{align*}
	Note that the right hand side of the above formula is
	$\Sigma_r$-invariant.

	From Remark~\ref{commutative} it follows
	that  if $\wt\left( i,j,k \right) = \theta\in \Lambda\left( n,n,n;r \right)$, then $\left(
	i,j \right)\in \omega$ if and only if $\omega=\theta^3$,
	and $\left( j,k \right)\in \pi$ if and only if  $\pi=
	\theta^1$. Thus
	\begin{align*}
		\xi_{\omega}\xi_{\pi} = \sum_{\theta\colon \theta^3=\omega,\
		\theta^1=\pi}\ \ \ \sum_{\left( i,j,k \right)\in \theta}
		e_{i,k}.
	\end{align*}
	Let us fix $\theta\in \Lambda\left( n,n,n;r \right)$ such that
	$\theta^1= \pi$ and $\theta^3 = \omega$. 
	Then $\left( i,j,k \right)\in \theta$ implies that $\left( i,k
	\right)\in \theta^2$. Thus $\sum_{\left( i,j,k \right)\in \theta }
	e_{i,k}$ is a multiple of $\xi_{\theta^2}$, where the multiplicity is
	given by the number
	$$
	\#\left\{\, j \,\middle|\, \left( i,j,k \right)\in \theta \right\}
	$$
	for each pair $\left( i,k \right)\in \theta^2$. 
	Now we fix $\left( i,k \right)\in\theta^2$ and define the sets
	$X_{sq}$ by 
	$$
	X_{sq} := \left\{\, 1\le t \le r \,\middle|\, i_t =s,\ k_t=q  \right\}.
	$$
	For every $j$ such that $\left( i,j,k \right)\in \theta$ and $1\le
	s,q\le n$ we define the function
	\begin{align*}
		J_{sq}\colon X_{sq} & \to \bfn\\
                  t & \mapsto j_t.
	\end{align*}
	Then defining $\wt\left( J_{sq} \right)$ by 
	$$
	\wt\left( J_{sq} \right)_v = \# \left\{\, t\in X_{sq} \,\middle|\,
	J_{sq} = v
	\right\}, 
	$$
	we get 
	$\wt\left( J_{sq} \right) = \left( \theta_{s1q}, \dots,
	\theta_{snq} \right)$. On the other hand if we have a collection of
	functions $\left( J_{sq} \right)_{s,q=1}^n$ such that $\wt\left(
	J_{sq}
	\right) = \left( \theta_{s1q}, \dots, \theta_{snq} \right)$, then we can
	define $j\in I\left( n,r \right)$ by $j_t = J_{sq}\left( t
	\right)$ for $t\in X_{sq}$. Since $\bfr$ is the disjoint union of the sets
	$X_{sq}$ the multi-index $j$ is well defined. Moreover, 
	\begin{align*}
		\wt\left( i,j,k \right)_{stq} = \# \left\{\, 1\le v \le r \,\middle|\,
		i_v=s,\ j_v=t,\ k_v= q \right\} = \wt\left( J_{sq} \right)_t =
		\theta_{stq}.
	\end{align*}
	Thus $\left( i,j,k \right)\in \theta$. This gives a one-to-one
	correspondence between the set of those $j\in I\left( n,r \right)$ such that $\left(
	i,j,k
	\right)\in \theta$ and the set of collections of functions $\left( J_{sq}
	\right)_{s,q=1}^n$, $J_{sq}\colon X_{sq}\to \bfn$ such that $\wt\left(
	J_{sq} \right) = \left( \theta_{s1q},\dots, \theta_{snq} \right)$ for
	all $1\le s,q\le n$. 
	The number of possible choices for
	each $J_{sq}$ is 
	$$
	\binom{\theta_{s1q}+ \dots + \theta_{snq}}{\theta_{s1q},\dots,
	\theta_{snq}} = \binom{\left( \theta^2 \right)_{sq}}{\theta_{s1q},\dots,
	\theta_{snq}}.
	$$
	Since the choices of $J_{sq}$ for different pairs $\left( s,q
	\right)$ can be done independently we get that the number of elements in
	$\left\{\, j \,\middle|\, \left( i,j,k \right)\in\theta \right\}$ is
	given by $\left[ \theta \right]$. 
\end{proof}
\begin{remark}
	\label{prodzero}
	Given $\omega$, $\pi\in \Lambda\left( n,n;r \right)$
	 if there is $\theta\in \Lambda\left( n,n,n;r \right)$ such
	that $\theta^3 = \omega$ and $\theta^1 =\pi$ then
	$$
	\omega^1 = \left( \theta^3 \right)^1 = \left( \theta^1 \right)^2 =
	\pi^2.
	$$
	Thus if $\omega^1\not= \pi^2$ then $\xi_{\omega}\xi_{\pi} = 0$. 
	From this and the definition of $\xi_{\lambda}$ it follows that
	\begin{align*}
		\xi_{\lambda}\xi_{\omega} & = 
		\begin{cases}
			\xi_{\omega}, & \lambda = \omega^2\\
			0 , & \mbox{otherwise}
		\end{cases} & 
		\xi_{\omega}\xi_{\lambda} & = 
		\begin{cases}
			\xi_{\omega}, & \lambda = \omega^1 \\
			0, & \mbox{otherwise.}
		\end{cases}
	\end{align*}
\end{remark}

\section{The reduced bar construction}
	\label{bar_constr}
In this section we recall the construction of the  reduced bar resolution. This
is a
partial case of the construction described in Chapter IX, \S 7 of \cite{homology}.
We will use in this section a slightly different notation, namely we write
$i$, $j$, $k$ for natural numbers. 

Let $A$ be a ring with identity $e$ and $S$ a subring of $A$. We assume that in the category of rings there is a
splitting $p\colon A\to S$ of the inclusion map
$ S\to A$. We denote the kernel of $p$ by $I$. Then $I$ is an
$S$-bimodule. 
Denote by $\widetilde{p}$ the map from $A$ to $I$ given by $a\mapsto a-p\left(
a \right)$. Obviously $\widetilde{p}$ is a homomorphism of $S$-bimodules
and the restriction of $\widetilde{p}$ to $I$ is the  identity map.

For every left $A$-module $M$ we define the complex $B_k\left(
A,S,M \right)$, $k\ge -1$, as follows. We set $B_{-1}\left(
A, S,M \right)= M$, $B_0\left(
A, S,M \right) = A\otimes M$ and for $k\ge 1$, $B_k\left(A, S,M \right) = A\otimes
I^{\otimes k} \otimes M$, where all the tensor products are taken over $S$.
Next we define $A$-module homomorphisms $d_{kj}\colon B_{k}\left( A,S,M
\right)\to B_{k-1}\left( A,S,M \right) $, $0\le j\le k$, 
and $S$-module homomorphisms $s_k\colon B_k\left( A,S,M \right)\to B_{k+1}\left(
A,S,M \right)$ by
\begin{align*}
	d_{0,0} \left( a\otimes m \right) & := am\\
	d_{k,0} \left( a\otimes a_1 \otimes \dots \otimes a_k\otimes m \right)
	&:= a a_1 \otimes a_2 \otimes \dots \otimes a_k \otimes
	m\\
	d_{k,j} \left( a\otimes a_1 \otimes \dots \otimes a_k\otimes m \right)
	& :=  a\otimes a_1 \otimes \dots a_j a_{j+1} \otimes \dots a_k \otimes m
	& 1\le j\le k-1\\
	d_{k,k} \left( a\otimes a_1 \otimes \dots \otimes a_k\otimes m \right)
	&:= a\otimes a_1\otimes \dots \otimes a_{k-1} \otimes a_k m\\
	s_{-1} \left( m \right) & := e\otimes m\\
	s_k \left( a\otimes a_1 \otimes \dots \otimes a_k \otimes m \right) &:= 
	e \otimes  \widetilde{ p}\left( a \right)  \otimes a_1 \otimes
	\dots \otimes a_k \otimes m & 0 \le k. 
\end{align*}
It is easy to see
that 
\begin{align*}
	d_{k,i}d_{k+1,j} &= d_{k,j-1}d_{k+1,i} & 0\le i<j\le k\\
	s_{k-1} d_{k,j} &= d_{k+1,j+1} s_k & 1\le j\le k.
\end{align*}
Moreover for $k\ge 0$
\begin{align*}
	d_{0,0}s_{-1} \left( m \right) &= d_{0,0}\left( e\otimes m \right) = m\\
	d_{k+1,0}s_{k} \left( a\otimes a_1\otimes\dots \otimes a_k \otimes m \right) &=
	\widetilde{p}\left( a \right) \otimes a_1 \otimes \dots\otimes a_k \otimes m\\
d_{k+1,1}s_{k} \left( a\otimes a_1\otimes\dots \otimes a_k \otimes m \right) &=
e \otimes	\widetilde{p}\left( a \right) a_1 \otimes \dots\otimes a_k \otimes m
\\
s_{-1} d_{0,0}\left( a\otimes m \right) &= s_{-1}\left( am \right) = e\otimes
am\\
s_{k-1} d_{k,0}\left( a\otimes a_1\otimes \dots \otimes a_k \otimes m \right) &= 
e \otimes aa_1 \otimes a_2 \otimes\dots \otimes a_k \otimes m.
\end{align*}
Note that in the last identity we used the fact that $aa_1\in I$ which implies $\widetilde{p}\left(
aa_1 \right) = aa_1$. 
From the formulas above we have $d_{0,0}s_{-1} = \id_M$ and, taking into account that
$\widetilde{p}\left( a \right) = a- p\left( a \right)$ and $p\left( a \right)\in S$, 
\begin{align*}
	&	\left(	d_{k+1,0}s_k - d_{k+1,1}s_k + s_{k-1}d_{k,0} \right) \left(
	a\otimes a_1\otimes \dots \otimes a_k\otimes m \right) = \\
	&
	\phantom{\mbox{ab}} = 
	a\otimes a_1 \otimes \dots \otimes a_k \otimes m - e \otimes p\left(
	a \right)a_1 \otimes \dots \otimes a_k\otimes m\\
	&\phantom{ab=} - e\otimes aa_1 \otimes a_2 \otimes \dots \otimes a_k \otimes
	m + e\otimes p\left( a \right)a_1 \otimes a_2 \otimes \dots \otimes a_k \otimes
	m\\
	&\phantom{ab=} + e\otimes aa_1 \otimes a_2 \dots \otimes a_k \otimes m\\
	&\phantom{ab} = a\otimes a_1 \otimes \dots \otimes a_k \otimes m.
\end{align*}
Thus $d_{k+1,0}s_k - d_{k+1,1}s_k + s_{k-1}d_{k,0}  = \id_{B_k\left( A,S,M
\right)}$, $k\ge 0$.

Define $d_k\colon   B_k\left( A,S,M \right)\to
B_{k-1}\left( A,S,M \right)$ by 
\begin{align*}
	d_k&  := \sum_{t=0}^k \left( -1 \right)^t d_{k,t}.
\end{align*}
	The above computations show that 
\begin{proposition}
	\label{bar} The sequence $\left( B_k\left( A,S,M \right), d_k
	\right)_{k\ge -1}$ is a complex of left $A$-modules. Moreover 
	\begin{align*}
		d_{0}s_{-1}& = \id_{B_{-1}\left( A,S,
		M\right)}\\
		d_{k+1}s_k + s_{k-1} d_{k}& = \id_{B_k\left( A,S,M \right)} &
	 	0\le k.
	\end{align*}
	Thus $s_k$, $k\ge -1$, 
	give
	a splitting of $B_*\left( A,S,M \right)$ in the category of $S$-modules. In particular, $\left(
	B_k\left( A,S,M \right),d_k
	\right)_{k\ge -1}$ is exact.
\end{proposition}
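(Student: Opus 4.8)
The proof is essentially a verification that the explicitly listed simplicial-type identities between the face maps $d_{k,j}$ and the contracting homotopy maps $s_k$ assemble into the two required properties: that $(B_*, d_*)$ is a chain complex, and that the $s_k$ form a contracting homotopy. The plan is as follows.

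First I would verify that $d_{k-1}d_k = 0$. Writing $d_k = \sum_{t=0}^k(-1)^t d_{k,t}$, the composite $d_{k-1}d_k$ expands as $\sum_{i,j}(-1)^{i+j}d_{k-1,i}d_{k,j}$. I split this double sum according to whether $i \geq j$ or $i < j$. For the terms with $0 \le i < j \le k$ one uses the first family of identities, $d_{k-1,i}d_{k,j} = d_{k-1,j-1}d_{k,i}$, which is exactly the standard simplicial relation (its verification is a routine case-check on where the indices $i,j$ sit relative to each other and relative to the two ends $a\otimes\cdots$ and $\cdots\otimes m$ — I would not reproduce it). Reindexing the $i<j$ block by $(i',j') = (j-1,i)$ turns it into the negative of the $i \ge j$ block, so the whole sum cancels. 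Hence $d_{k-1}d_k = 0$ and $(B_k, d_k)_{k\ge -1}$ is a complex of left $A$-modules (each $d_{k,t}$ is visibly $A$-linear, being built from multiplication and the $S$-bimodule map $\widetilde p$, so $d_k$ is $A$-linear too).

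Second, I would assemble the homotopy identity $d_{k+1}s_k + s_{k-1}d_k = \id_{B_k}$ for $k \ge 0$, together with $d_0 s_{-1} = \id_{B_{-1}}$. The case $k=-1$ is the already-recorded fact $d_{0,0}s_{-1} = \id_M$. For $k \ge 1$, expand $d_{k+1}s_k = \sum_{t=0}^{k+1}(-1)^t d_{k+1,t}s_k$ and $s_{k-1}d_k = \sum_{t=0}^{k}(-1)^t s_{k-1}d_{k,t}$. Using the identity $s_{k-1}d_{k,j} = d_{k+1,j+1}s_k$ valid for $1 \le j \le k$, every term $s_{k-1}d_{k,j}$ with $j \ge 1$ matches, up to sign, the term $d_{k+1,j+1}s_k$ in the other sum, and these pairs cancel because the signs $(-1)^j$ and $(-1)^{j+1}$ are opposite. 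What survives is precisely $d_{k+1,0}s_k - d_{k+1,1}s_k + s_{k-1}d_{k,0}$, and this is exactly the combination the excerpt already computed to equal $\id_{B_k}$ (using $\widetilde p(a) = a - p(a)$, $p(a)\in S$, and $\widetilde p(aa_1)=aa_1$ since $aa_1\in I$). The case $k=0$ is handled separately since there is no $d_{k,0}$-with-$j\ge 1$ to cancel, but the displayed formulas for $d_{1,0}s_0$, $d_{1,1}s_0$, and $s_{-1}d_{0,0}$ give the result directly.

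Finally, the contracting homotopy immediately yields exactness: if $d_k x = 0$ for $x \in B_k$ then $x = (d_{k+1}s_k + s_{k-1}d_k)x = d_{k+1}(s_k x)$, so $\ker d_k = \im d_{k+1}$, and at $k=-1$ surjectivity of $d_0$ follows from $d_0 s_{-1} = \id$. The maps $s_k$ are $S$-linear by construction (the leftmost tensor factor $e$ is fixed and $\widetilde p$ is an $S$-bimodule map), so this splitting lives in the category of $S$-modules. The only mildly delicate point — and the place I would be most careful — is the bookkeeping of signs and of the boundary index ranges ($j=0$ and $j=k$ behave differently from $1 \le j \le k-1$) in the cancellation arguments; the individual identities themselves are the familiar simplicial and extra-degeneracy relations and need only a mechanical check.
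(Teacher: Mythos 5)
Your proposal is correct and follows essentially the same route as the paper, which simply states ``the above computations show that'' after recording the same simplicial and extra-degeneracy identities you invoke; you are merely spelling out the reindexing that cancels the $i<j$ and $i\ge j$ blocks of $d_{k-1}d_k$ and the telescoping that reduces $d_{k+1}s_k+s_{k-1}d_k$ to the three surviving terms already computed in the text. One small point: the $k=0$ case of the homotopy identity does not actually need separate treatment, since the cancellation argument degenerates vacuously there (there are no $s_{k-1}d_{k,j}$ terms with $j\ge 1$ to cancel) and the same three surviving terms appear.
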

\begin{definition}
	The complex $\left( B_k\left( A,S,M \right), d_k \right)_{k\ge -1}$ is
	called
	the
	\emph{reduced bar resolution}. 
\end{definition}
	\section{Borel-Schur algebras}
	\label{borel}
In this section we introduce the Borel-Schur algebra $S^+_R\left( n,r
\right)$ and apply the results of the previous section to the construction of
the reduced bar resolution for the irreducible $S^+_R\left( n,r
\right)$-modules. 
We should 
remark
that $S^+_R\left( n,r \right)$ can be identified with the Borel subalgebra 
$S\left(B^+  \right)$ defined in~\cite[ \S8 ]{greencombinatorics}. In case 
$R$ is an infinite field the algebra $S^+_R\left( n,r \right)$ can be also identified with
the algebra $S\left( B^+ \right)$ used
in~\cite{green2},~\cite{aps}, and~\cite{woodcock}. 

On the set $I\left( n,r \right)$ we define the ordering $\le$ by
$$
i\le j \Leftrightarrow i_1\le j_1,\ i_2\le j_2,\ \dots,\ i_n\le j_n.
$$
We write $i<j$ if $i\le j$ and $i\not= j$. 
Note that $\le$ is $\Sigma_r$ invariant.
Denote by $\Lambda^s\left( n,n;r \right)$ the set 
$$\Lambda^s\left( n,n;r \right) = \left\{\, \omega\in
\Lambda\left( n,n;r \right) \,\middle|\,
\parbox{11em}{$\omega$ is upper triangular and $$\sum_{1\le k\le l\le n} \left( l-k
\right)\omega_{kl}\ge s$$}  \right\}.$$ 
\begin{remark}
	\label{less}
Clearly $i\le j$ and $i<j$ are equivalent to $\wt\left( i,j \right)\in \Lambda^0\left( n,n;r
\right)$ and $\wt\left( i,j \right)\in \Lambda^1\left(n,n;r  \right)$,
respectively.
\end{remark}
	For $s\ge0$ denote by $J_s^R\left( n,r \right)$ the $R$-submodule of $S_R\left(
n,r \right)$ spanned by the set $\left\{\, \xi_{\omega} \,\middle|\, \omega\in
\Lambda^s\left( n,n;r \right)\right\}$. Note that for $s>r(n-1)$ the set $\Lambda^s\left( n,n;r \right)$ is empty,
therefore $J_s^R\left( n,r \right)$ is zero for $s\gg 0$.
\begin{proposition}
	\label{ideal}
	If $\omega\in \Lambda^s\left( n,n;r \right)$ and $\pi\in \Lambda^t\left(
	n,n;r \right)$, then $\xi_{\omega}\xi_{\pi}\in J_{s+t}^R\left( n,r
	\right)$. 
\end{proposition}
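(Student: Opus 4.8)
The plan is to use the explicit product formula from Proposition~\ref{mult}. Given $\omega\in\Lambda^s(n,n;r)$ and $\pi\in\Lambda^t(n,n;r)$, we have
\begin{align*}
\xi_\omega\xi_\pi = \sum_{\theta:\ \theta^3=\omega,\ \theta^1=\pi}[\theta]\,\xi_{\theta^2}.
\end{align*}
So it suffices to show that every $\theta\in\Lambda(n,n,n;r)$ with $\theta^3=\omega$ and $\theta^1=\pi$ and $[\theta]\neq 0$ has $\theta^2\in\Lambda^{s+t}(n,n;r)$. Two things must be checked: that $\theta^2$ is upper triangular, and that $\sum_{a\le c}(c-a)(\theta^2)_{ac}\ge s+t$.

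For the triangularity, I would argue as follows. Since $\omega=\theta^3$ is upper triangular, $\theta_{stq}=0$ whenever $s>q$ (summing over $t$: $(\theta^3)_{sq}=\sum_t\theta_{stq}=\omega_{sq}=0$ forces each $\theta_{stq}=0$ as all entries are non-negative). Similarly, since $\pi=\theta^1$ is upper triangular, $\theta_{stq}=0$ whenever $t>q$. Hence $\theta_{stq}\neq 0$ implies $s\le q$ and $t\le q$; in particular if $s>t$ we still could have $\theta_{stq}\neq 0$, so I must be a little careful. But $(\theta^2)_{sq}=\sum_t\theta_{stq}$, and for this to be nonzero we need some $t$ with $\theta_{stq}\neq 0$, hence $s\le q$. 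Thus $\theta^2$ is upper triangular. (The condition $t\le q$ is not needed here but will be used for the weight bound; actually I should double-check: for the bound I want to compare $(c-a)$ weighted by $\theta^2$ against the $\omega$- and $\pi$-weights, and the key is $s\le t\le q$ — wait, $\omega$ upper triangular gives $s\le q$ via $\theta^3$, and $\pi=\theta^1$ upper triangular relates indices $t,q$: $(\theta^1)_{tq}=\sum_s\theta_{stq}=\pi_{tq}=0$ for $t>q$, so $\theta_{stq}\neq0\Rightarrow t\le q$. Good, so $\theta_{stq}\neq 0\Rightarrow s\le q$ and $t\le q$.)

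For the weight inequality, the natural identity to exploit is that for any $\theta$,
\begin{align*}
\sum_{a,c}(c-a)(\theta^2)_{ac}=\sum_{s,t,q}(q-s)\theta_{stq},\qquad
\sum_{a,c}(c-a)(\theta^3)_{ac}=\sum_{s,t,q}(t-s)\theta_{stq},\qquad
\sum_{a,c}(c-a)(\theta^1)_{ac}=\sum_{s,t,q}(q-t)\theta_{stq},
\end{align*}
which follow directly from the definitions of $\theta^1,\theta^2,\theta^3$ by expanding the sums. Adding the last two and using $(t-s)+(q-t)=(q-s)$, we get
\begin{align*}
\sum_{a,c}(c-a)(\theta^2)_{ac}=\sum_{a,c}(c-a)(\theta^3)_{ac}+\sum_{a,c}(c-a)(\theta^1)_{ac}
=\sum_{a,c}(c-a)\omega_{ac}+\sum_{a,c}(c-a)\pi_{ac}\ge s+t,
\end{align*}
the last step by the defining inequalities for $\Lambda^s$ and $\Lambda^t$ (note all terms $(c-a)\omega_{ac}$ and $(c-a)\pi_{ac}$ are non-negative since $\omega,\pi$ are upper triangular, and the full sums are exactly the quantities bounded below by $s$ and $t$). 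Combining with triangularity of $\theta^2$ gives $\theta^2\in\Lambda^{s+t}(n,n;r)$, hence $\xi_{\theta^2}\in J_{s+t}^R(n,r)$, and therefore $\xi_\omega\xi_\pi$ is an $R$-linear combination of such elements, so lies in $J_{s+t}^R(n,r)$.

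I expect the only mildly delicate point to be the bookkeeping with upper-triangularity: one must be sure that $\theta^3=\omega$ upper triangular really does force $\theta_{stq}=0$ for $s>q$ (rather than just the sum vanishing), which is immediate from non-negativity of the integer entries, and similarly for $\theta^1=\pi$. Once that is in place, the additivity identity $(q-s)=(t-s)+(q-t)$ does all the work, and nothing else is needed — no binomial coefficients enter, so the hypothesis $[\theta]\neq 0$ is not even used.
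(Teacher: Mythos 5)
Your approach is the same as the paper's: apply Proposition~\ref{mult}, then show each $\theta^2$ appearing on the right-hand side lies in $\Lambda^{s+t}(n,n;r)$ by checking triangularity and the weight inequality. The weight computation is correct and essentially identical in content — the paper works with the restricted sum $\sum_{k\le q\le l}$ and splits $(l-k)=(l-q)+(q-k)$, while you work with the unrestricted sum and split $(q-s)=(t-s)+(q-t)$, invoking upper triangularity of $\omega$, $\pi$, $\theta^2$ only at the end to pass between restricted and unrestricted sums; this is a cosmetic difference.

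There is, however, a genuine indexing error in your triangularity argument. You wrote ``$(\theta^3)_{sq}=\sum_t\theta_{stq}$'', but by the paper's definitions this is $(\theta^2)_{sq}$, not $(\theta^3)_{sq}$; what is true is $(\theta^3)_{st}=\sum_q\theta_{stq}$. Hence upper triangularity of $\omega=\theta^3$ gives $\theta_{stq}=0$ for $s>t$, not for $s>q$ as you claim. Your ``double-check'' paragraph repeats the same slip (``$\omega$ upper triangular gives $s\le q$ via $\theta^3$''). The conclusion you want, $\theta_{stq}\neq 0\Rightarrow s\le q$, is true, but it comes from combining both constraints: $\theta^3=\omega$ upper triangular gives $s\le t$, and $\theta^1=\pi$ upper triangular gives $t\le q$, whence $s\le t\le q$ and in particular $s\le q$, so $(\theta^2)_{sq}=\sum_t\theta_{stq}$ vanishes for $s>q$. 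This is exactly what the paper does. The error is easily repaired and does not affect the rest of your argument, but as written the derivation of upper triangularity of $\theta^2$ is incorrect.
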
 
\begin{proof}
	By Proposition~\ref{mult} $\xi_{\omega}\xi_{\pi}$ is a linear combination
	of the elements $\xi_{\theta^2}$ for $\theta\in \Lambda\left( n,n,n;r
	\right)$ satisfying $\theta^3=\omega$ and $\theta^1=\pi$.
	For any such $\theta$ the condition $\omega_{kq} =0$ for $k>q$ implies
	that $\theta_{kql}= 0$ for $k>q$, and the condition $\pi_{ql}=0$ for $q>l$ implies
	$\theta_{kql}=$ for $q>l$. Therefore for any $q$ and for $k>l$ we have
	$\theta_{kql}=0$. Hence $\theta^2$ is upper triangular. Moreover,
	\begin{align*}
		\sum_{k\le l} \left( l-k
		\right)\left( \theta^2 \right)_{kl} &= \sum_{k\le l}
		\sum_{q=1}^n \left( l-k \right)\theta_{kql} = \sum_{k\le q\le l} \left( l-k \right)
\theta_{kql}		\\& =\sum_{k\le q \le l} \left( l-q \right) \theta_{kql}+
		\sum_{k\le q\le l} \left( q-k \right)\theta_{kql} 
		\\& = \sum_{q\le l}\left( \sum_{k=1}^n \left( l-q \right)\theta_{kql} \right) + \sum_{k\le q}\left( \sum_{l=1}^n \left( q-k \right)\theta_{kql}
		\right)
		\\& = \sum_{q\le l}\left( l-q \right)\left( \theta^1 \right)_{ql}
		+ \sum_{k\le q}\left( q-k \right)\left( \theta^3 \right)_{kq}\\&  =
		\sum_{q\le l}\left( l-q \right)\pi_{ql}
		+ \sum_{k\le
		q}\left( q-k \right) \omega_{kq}\ge t + s. 
	\end{align*}
\end{proof}
Define  $S^+_R\left( n,r
\right) := J_0^R\left( n,r \right)$ and $J_R = J_R\left( n,r \right) := J_1^R\left(
n,r \right)$.
Then for every $\lambda\in \Lambda\left( n;r \right)$ we have
$\xi_{\lambda}\in S^+_R\left( n,r \right)$. In particular the identity of
$S_R\left( n,r \right)$ is in $S^+_R\left(
n,r \right)$. Now it follows from Proposition~\ref{ideal} that $S^+_R\left( n,r
\right)$ is a subalgebra of $S_R\left( n,r \right)$ and $J_R\left( n,r
\right)$ is a nilpotent ideal of $S^+_R\left( n,r
\right)$. 
\begin{definition}
The algebra $S^+_R\left( n,r
\right)$ is called the 
\emph{Borel-Schur algebra}. 
\end{definition}
Let $L_{n,r} = \bigoplus_{\lambda\in \Lambda\left( n,r \right)} R\xi_{\lambda}$.
Then $L_{n,r}$ is a commutative $R$-subalgebra of $S^+_R\left( n,r \right)$, and
 $S^+\left( n,r \right)= L_{n,r}\oplus J_R\left( n,r
\right)$. 
Since $J_R\left( n,r \right)$ is an ideal in $S^+_R\left( n,r \right)$,
this direct sum decomposition implies that the natural inclusion of
$L_{n,r}$ has a splitting in the category of $R$-algebras. So we can apply to
$S^+_R\left( n,r \right)$ and $L_{n,r}$ the reduced bar construction described
in Section~\ref{bar_constr}. 

For every $\lambda\in \Lambda\left( n;r \right)$ we have a rank one
module $R_\lambda:= R\xi_\lambda$ over $L_{n,r}$. Note that $\xi_{\lambda}$ acts on
$R_{\lambda} = R\xi_\lambda$ by identity, and $\xi_\mu$, $\mu\not=\lambda$, acts by zero. We
will denote in the same way the module over $S^+_R\left( n,r \right)$ obtained
from $R_\lambda$ by inflating along the natural projection of $S^+_R\left(
n,r \right)$ on $L_{n,r}$. 

Note that 	if $R$ is a field the algebra $L_{n,r}$ is semi-simple,
	and so $J_R\left( n,r \right)$ is the radical of $S^+_R\left(
	n,r\right).$ In this case $\left\{\, R_{\lambda} \,\middle|\,
	\lambda\in \Lambda\left( n,r \right) \right\}$ is a complete set of
	pairwise non isomorphic simple modules over $S^+_R\left( n,r \right)$.
	For more details the reader is referred to \cite{aps}. 

For $\lambda\in \Lambda\left( n;r \right)$ we denote the resolution $B_*\left(
S^+\left( n,r \right), L_{n,r}, R_\lambda
\right)$ defined in Section~\ref{bar_constr} by $B^+_*\left( R_\lambda \right)$. 
Then
\begin{align}
	\label{barplus}
B_k^+\left( R_\lambda \right) := S^+_R\left( n,r \right)\otimes J_R\left(
n,r\right)\otimes \dots \otimes J_R\left( n,r \right)\otimes R_{\lambda}
, 
\end{align}
	where all tensor products are over $L_{n,r}$ and there are $k$ factors $J_R\left( n,r
\right)$.

 Let $M$ be a right $L_{n,r}$-module and $N$ a left 
	$L_{n,r}$-module. It follows from Corollary~9.3 in~\cite{homology} that $M\otimes_{L_{n,r}} N = \bigoplus_{\lambda\in
	\Lambda\left( n;r \right)} M\xi_{\lambda}\otimes_{R
	\xi_{\lambda} } \xi_{\lambda} N$. 
	Hence $M\otimes_{L_{n,r}}N \cong \bigoplus_{\lambda\in \Lambda\left(
	n;r \right)} M\xi_{\lambda}\otimes_R \xi_{\lambda}N $.

Therefore $B_k^+\left( R_\lambda \right)$ is the
direct sum over all sequences $\mu^{\left( 1 \right)}$, \dots, $\mu^{\left( k+1
\right)}\in \Lambda\left(n;r 
\right)$ of the $S^+\left( n,r \right)$-modules
\begin{align}
	\label{resolutionplus}
	S^+_R\left( n,r \right)\xi_{\mu^{\left( 1 \right)}}\otimes
	\xi_{\mu^{\left( 1
	\right)}}J_R\left( n,r \right)\xi_{\mu^{\left( 2 \right)}}\otimes \dots
\otimes
\xi_{\mu^{\left( k \right)}}J_R\left( n,r \right)\xi_{\mu^{\left( k+1
\right)}}\otimes \xi_{\mu^{\left( k+1 \right)}}R_{\lambda},
\end{align}
	where all tensor products are over $R$. Since $\xi_{\mu^{\left( k+1 \right)}} R_{\lambda} = 0$
unless $\mu^{(k+1)}= \lambda$, the summation is in fact over the sequences
$\mu^{\left( 1 \right)}$,
\dots, $\mu^{\left( k \right)}\in \Lambda\left( n,r \right)$. 

Recall that it is said that   $\nu\in \Lambda\left( n;r \right)$
\emph{dominates} $\mu\in\Lambda(n;r)$ 
if 
\[
\sum\limits_{s=1}^t\nu_s\ge\sum\limits_{s=1}^t\mu_s
\]
for all $t$. If $\nu$ dominates $\mu$ we write $\nu\dominate\mu$.  
If $\nu$ strictly dominates $\mu$ we write $\nu\strdominate\mu$. 
Note that if $i$, $j\in I\left( n,r \right)$ and $i\le j$ or $i<j$, then $\wt\left( i
\right)\dominate \wt\left( j \right)$ or $\wt\left( i \right)\strdominate
\wt\left( j \right)$, respectively. 
\begin{proposition}
	\label{dominate} Let $\nu$, $\mu\in \Lambda\left( n;r \right)$. Then
	$\xi_{\nu} J_R\left( n,r \right)\xi_{\mu} = 0$ unless
	$\nu\strdominate\mu$. If $\nu\strdominate\mu$ then $$
	\left\{\, \xi_{\omega} \,\middle|\,\omega\in \Lambda^1\left(n,n;r 
	\right), \ \omega^1 = \mu,\ \omega^2= \nu   \right\}
	$$
is an $R$-basis of the free $R$-module $\xi_{\nu} J_R\left(
	n,r \right)\xi_{\mu}$.
\end{proposition}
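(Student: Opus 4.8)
The plan is to analyze the product $\xi_\nu J_R(n,r)\xi_\mu$ directly using the multiplication formula of Proposition~\ref{mult} together with the description of $J_R(n,r)=J_1^R(n,r)$ as the $R$-span of $\{\xi_\omega\mid\omega\in\Lambda^1(n,n;r)\}$. Since $J_R(n,r)$ is spanned by the $\xi_\omega$ with $\omega$ upper triangular and $\sum_{k\le l}(l-k)\omega_{kl}\ge 1$, it suffices to understand $\xi_\nu\xi_\omega\xi_\mu$ for such $\omega$. First I would invoke Remark~\ref{prodzero}: $\xi_\nu\xi_\omega=\xi_\omega$ exactly when $\nu=\omega^2$ and is $0$ otherwise, and $\xi_\omega\xi_\mu=\xi_\omega$ exactly when $\mu=\omega^1$ and is $0$ otherwise. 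Hence $\xi_\nu J_R(n,r)\xi_\mu$ is the $R$-span of those $\xi_\omega$ with $\omega\in\Lambda^1(n,n;r)$, $\omega^1=\mu$, and $\omega^2=\nu$. These $\xi_\omega$ are distinct basis elements of $S_R(n,r)$, so they are automatically $R$-linearly independent; this already gives that the displayed set is an $R$-basis of $\xi_\nu J_R(n,r)\xi_\mu$ whenever that space is nonzero, and that it is a free $R$-module.

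It remains to show that this index set is empty unless $\nu\strdominate\mu$, i.e. that the existence of an upper triangular $\omega$ with $\omega^1=\mu$, $\omega^2=\nu$, and $\sum_{k\le l}(l-k)\omega_{kl}\ge 1$ forces $\nu\strdominate\mu$. For the domination part: if $\omega$ is upper triangular then for each $t$,
\[
\sum_{s=1}^{t}\nu_s=\sum_{s=1}^{t}\sum_{l=1}^{n}\omega_{sl}\ge\sum_{s=1}^{t}\sum_{l=1}^{t}\omega_{sl}=\sum_{l=1}^{t}\sum_{s=1}^{t}\omega_{sl}=\sum_{l=1}^{t}\sum_{s=1}^{l}\omega_{sl}=\sum_{l=1}^{t}\sum_{s=1}^{n}\omega_{sl}=\sum_{l=1}^{t}\mu_l,
\]
where the third equality uses upper triangularity ($\omega_{sl}=0$ for $s>l$) to drop the terms with $s>l$ in the inner sum, and then upper triangularity again to extend the range of $s$ back to $n$. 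So $\nu\dominate\mu$. Strictness comes from the extra condition: if $\nu=\mu$ then the inequality above is an equality for every $t$, which forces $\omega_{sl}=0$ whenever $s\le t<l$, for all $t$; taking $t=l-1$ this means $\omega_{sl}=0$ for all $s<l$, so $\omega$ is diagonal and $\sum_{k\le l}(l-k)\omega_{kl}=0$, contradicting $\omega\in\Lambda^1(n,n;r)$. Hence $\nu\neq\mu$, so $\nu\strdominate\mu$.

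The converse direction --- that when $\nu\strdominate\mu$ the set is genuinely nonempty --- is also needed for the final sentence of the statement. Here I would exhibit an explicit $\omega$: since $\nu\strdominate\mu$, there is a smallest index $l$ with $\sum_{s\le l}\nu_s>\sum_{s\le l}\mu_s$; intuitively one builds $\omega$ row by row so that $\omega^1=\mu$ and $\omega^2=\nu$ by a greedy ``northwest-corner'' filling of an upper triangular matrix with prescribed row sums $\nu$ and column sums $\mu$, which is possible precisely because $\nu\dominate\mu$, and the strictness of the domination guarantees that the resulting matrix has some nonzero strictly-above-diagonal entry, hence lies in $\Lambda^1(n,n;r)$. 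I expect the main obstacle to be this last existence/construction step: formulating the greedy fill cleanly and verifying both that it respects upper triangularity and that $\sum_{k\le l}(l-k)\omega_{kl}\ge 1$; the linear independence and the domination inequality are routine once Remark~\ref{prodzero} is in hand.
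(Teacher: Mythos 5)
Your core argument is correct, and it takes a somewhat different route from the paper's for the domination step. You correctly use Remark~\ref{prodzero} to identify $\{\,\xi_\omega \mid \omega\in\Lambda^1(n,n;r),\ \omega^1=\mu,\ \omega^2=\nu\,\}$ as an $R$-basis of $\xi_\nu J_R(n,r)\xi_\mu$, and then you derive $\nu\strdominate\mu$ from the existence of such an $\omega$ by a direct computation with partial row and column sums of the upper-triangular matrix $\omega$; your strictness argument, showing that $\nu=\mu$ would force $\omega$ to be diagonal and hence not in $\Lambda^1(n,n;r)$, is also sound. The paper instead argues at the level of multi-indices: it picks $(i,j)\in\omega$, notes via Remark~\ref{less} that $\omega\in\Lambda^1(n,n;r)$ is equivalent to $i<j$, invokes the observation stated just before the proposition that $i<j$ implies $\wt(i)\strdominate\wt(j)$, and then concludes $\nu=\omega^2=\wt(i)\strdominate\wt(j)=\omega^1=\mu$ using Remark~\ref{commutative}. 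Your matrix computation in effect re-proves that preparatory observation in the upper-triangular case, so your version is more self-contained at the cost of being slightly longer; the paper's is shorter because it delegates to the remark.

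The one genuine misstep is in your last paragraph. The converse you sketch --- that $\nu\strdominate\mu$ forces the index set to be nonempty --- is \emph{not} needed for the proposition as stated, and you should not treat it as the ``main obstacle.'' If the set is empty then $\xi_\nu J_R(n,r)\xi_\mu=0$, and the empty set is a basis of the zero module, which is free of rank $0$; the second sentence of the proposition is then vacuously true. Accordingly, the paper's proof never attempts to establish nonemptiness. The greedy northwest-corner construction of an upper-triangular nonnegative integer matrix with prescribed margins is a real combinatorial statement, but it is a red herring here, and working it out would be wasted effort: what you already proved suffices.
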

\begin{proof}
	From   Remark~\ref{prodzero} it follows that the set
	$$
	\left\{\, \xi_{\omega} \,\middle|\,\omega\in \Lambda^1\left(n,n;r 
	\right), \ \omega^1 = \mu,\ \omega^2= \nu   \right\}
	$$
	is an $R$-basis for $\xi_{\nu}J_R\left( n,r \right)\xi_\mu$. Suppose
	this
	is not empty.  
	Then there is $\omega\in \Lambda^1\left( n,n;r \right)$ with $\omega^1 =
	\mu$ and $\omega^2 = \nu$. 
	Let $\left( i,j \right)\in \omega$. Then by Remark~\ref{less} we know
	that $i<j$. Therefore,  we have $\wt\left( i
	\right)\strdominate \wt\left( j \right)$. By Remark~\ref{commutative} we
	get
	$$
	\nu = \omega^2 = \wt\left( i,j \right)^2 = \wt\left( i \right)
	\strdominate \wt\left( j \right) = \wt\left( i,j \right)^1 = \omega^1 =
	\mu.
	$$
\end{proof}
\begin{corollary}
	\label{finite}
	Let $N$ be the length of the maximal strictly decreasing sequence in
	$\left( \Lambda\left( n;r \right), \strdominate \right)$. Then 
 $B^+_k\left(R_{\lambda}  \right) =0 $ for	$k>N$. 
\end{corollary}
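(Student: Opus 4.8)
The plan is to read the vanishing off directly from the explicit description of $B^+_k(R_\lambda)$ in~\eqref{resolutionplus} combined with Proposition~\ref{dominate}. Recall that $B^+_k(R_\lambda)$ is the direct sum, over all sequences $\mu^{(1)},\dots,\mu^{(k)}\in\Lambda\left( n;r \right)$ (with $\mu^{(k+1)}:=\lambda$), of the $S^+_R\left( n,r \right)$-modules
\[
S^+_R\left( n,r \right)\xi_{\mu^{(1)}}\otimes
\xi_{\mu^{(1)}}J_R\left( n,r \right)\xi_{\mu^{(2)}}\otimes\cdots\otimes
\xi_{\mu^{(k)}}J_R\left( n,r \right)\xi_{\mu^{(k+1)}}\otimes\xi_{\mu^{(k+1)}}R_\lambda ,
\]
all tensor products over $R$. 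So it is enough to show that every such summand is zero once $k>N$.

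Next I would apply Proposition~\ref{dominate}: for each $s$ with $1\le s\le k$ the factor $\xi_{\mu^{(s)}}J_R\left( n,r \right)\xi_{\mu^{(s+1)}}$ vanishes unless $\mu^{(s)}\strdominate\mu^{(s+1)}$. Hence the summand indexed by $\left( \mu^{(1)},\dots,\mu^{(k)} \right)$ can be nonzero only if
\[
\mu^{(1)}\strdominate\mu^{(2)}\strdominate\cdots\strdominate\mu^{(k)}\strdominate\mu^{(k+1)}=\lambda ,
\]
that is, only if $\left( \mu^{(1)},\dots,\mu^{(k+1)} \right)$ is a strictly decreasing sequence in $\left( \Lambda\left( n;r \right),\strdominate \right)$ with $k$ strict steps. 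Since $N$ is the length of the longest strictly decreasing sequence in this poset, such a sequence can exist only when $k\le N$. Therefore, if $k>N$ every summand vanishes and $B^+_k\left( R_\lambda \right)=0$.

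There is no real obstacle here: the corollary is an immediate bookkeeping consequence of Proposition~\ref{dominate} and the decomposition~\eqref{resolutionplus}. The only point worth recording is that $N$ is well defined and finite because $\Lambda\left( n;r \right)$ is a finite set partially ordered by $\strdominate$; in particular this, together with Corollary~\ref{finite}, shows that $B^+_*\left( R_\lambda \right)$ is a \emph{finite} resolution.
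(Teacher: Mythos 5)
Your proof is correct and is essentially the same argument the paper has in mind: the corollary is stated as an immediate consequence of Proposition~\ref{dominate} together with the decomposition~\eqref{resolutionplus}, and the paper's follow-up displayed formula for $B^+_k(R_\lambda)$ as a direct sum over strictly decreasing chains $\mu^{(1)}\strdominate\cdots\strdominate\mu^{(k)}\strdominate\lambda$ is exactly the reformulation you derive before reading off the vanishing.
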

We conclude that the resolutions $B^+_*\left(R_\lambda  \right)$ are finite, for
all $\lambda\in \Lambda\left( n;r \right)$. We also have $B_0^+\left( R_\lambda
\right) \cong S^+_R\left( n,r \right)\xi_{\lambda}$, and for $k\ge 1$
\begin{align*}
	B^+_k\left( R_\lambda \right) = \bigoplus_{\mu^{\left( 1
	\right)}\strdominate \dots \strdominate \mu^{\left( k
	\right)}\strdominate \lambda } S^+_R\left( n,r
	\right)\xi_{\mu^{\left( 1 \right)}} \otimes_R \xi_{\mu^{\left( 1
	\right)}} J_R \xi_{\mu^{\left( 2 \right)}} \otimes_R \dots \otimes_R
	\xi_{\mu^{\left( k \right)}} J_R \xi_{\lambda}, 
\end{align*}
where all $\mu^{\left( s \right)}$, $1\le s \le k$, are elements of $\Lambda\left( n;r
\right)$. 
Given $\mu\strdominate \lambda$ we denote by $\Omega_k^+\left( \lambda,\mu
\right)$ the set 
$$
\left\{\, \left( \omega_1,\dots, \omega_k
\right)\in \Lambda^1\left(n,n;r  \right)^k \,\middle|\,\substack{ \left( \omega_1
\right)^2 = \mu, \left( \omega_k
\right)^1 = \lambda
\\[1ex] \left( \omega_1 \right)^1 = \left( \omega_2 \right)^2,\dots,
\left( \omega_{k-1} \right)^1 = \left( \omega_k \right)^2}\right\}.
$$
Then 
\begin{align}
	\nonumber
	B_0^+ \left( R_\lambda \right) & \cong S^+_R\left( n,r
	\right)\xi_\lambda \\
	\label{eq:bk}
	B_k^+\left( R_\lambda \right)& \cong \bigoplus_{\mu\strdominate \lambda }
\left(
S^+_R\left( n,r \right)\xi_{\mu} \right)^{ \#\Omega^+_k\left(\lambda, \mu
\right)}, & k\ge 1 
\end{align}
	as  $S^+_R\left( n,r \right)$-modules. 
In particular, $B^+_k\left( R_\lambda \right)$ is a projective $S^+_R\left( n,r
\right)$-module for any $k\ge 0$. 
So we have the following result
\begin{proposition}
	\label{bplusresolution}
	Let $\lambda\in \Lambda\left( n;r \right)$. Then $B^+\left( R_\lambda
	\right)$ is a projective resolution of the module $R_\lambda$
	over $S^+_R\left( n,r \right)$.
\end{proposition}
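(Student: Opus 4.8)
The statement to prove is that $B^+_*\left( R_\lambda \right)$ is a projective resolution of $R_\lambda$ over $S^+_R\left( n,r \right)$. Almost everything needed is already assembled in the preceding discussion, so the plan is essentially to collect it. First I would recall that $B^+_*\left( R_\lambda \right)$ was \emph{defined} as the reduced bar resolution $B_*\left( S^+_R\left( n,r \right), L_{n,r}, R_\lambda \right)$, which is legitimate because $S^+_R\left( n,r \right) = L_{n,r}\oplus J_R\left( n,r \right)$ with $J_R$ an ideal, so the inclusion $L_{n,r}\hookrightarrow S^+_R\left( n,r \right)$ splits in the category of $R$-algebras via the projection $p$ with kernel $I = J_R$; this is exactly the input required by Section~\ref{bar_constr}. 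By Proposition~\ref{bar}, the complex $\left( B^+_k\left( R_\lambda \right), d_k \right)_{k\ge -1}$ is a complex of left $S^+_R\left( n,r \right)$-modules which is split exact as a complex of $L_{n,r}$-modules (via the contracting homotopy $s_k$), hence exact as a complex of abelian groups. Truncating away $B^+_{-1} = R_\lambda$, this says precisely that $B^+_*\left( R_\lambda \right)$ is a resolution of $R_\lambda$.

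It remains to observe that each term $B^+_k\left( R_\lambda \right)$ is projective over $S^+_R\left( n,r \right)$. This is the content of the isomorphisms~\eqref{eq:bk}: we have $B^+_0\left( R_\lambda \right)\cong S^+_R\left( n,r \right)\xi_\lambda$, and for $k\ge 1$,
\[
B^+_k\left( R_\lambda \right)\cong \bigoplus_{\mu\strdominate\lambda}\left( S^+_R\left( n,r \right)\xi_\mu \right)^{\#\Omega^+_k\left( \lambda,\mu \right)},
\]
a direct sum of modules of the form $S^+_R\left( n,r \right)\xi_\mu$, $\mu\in\Lambda\left( n;r \right)$. Each $\xi_\mu$ is an idempotent in $S^+_R\left( n,r \right)$ (by the orthogonal idempotent decomposition of the identity recalled after Remark~\ref{prodzero}), so $S^+_R\left( n,r \right)\xi_\mu$ is a direct summand of the free module $S^+_R\left( n,r \right)$, hence projective; a direct sum of projectives is projective. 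The passage from~\eqref{resolutionplus} to~\eqref{eq:bk} used Proposition~\ref{dominate} (to see that $\xi_\nu J_R\xi_\mu = 0$ unless $\nu\strdominate\mu$, and that otherwise it is free over $R$ on the stated basis) together with the decomposition $M\otimes_{L_{n,r}}N\cong\bigoplus_\lambda M\xi_\lambda\otimes_R\xi_\lambda N$; since each $\xi_\nu J_R\xi_\mu$ is $R$-free, the tensor product~\eqref{resolutionplus} is a free $R$-module and, as a left $S^+_R\left( n,r \right)$-module, is a direct sum of copies of $S^+_R\left( n,r \right)\xi_{\mu^{(1)}}$ indexed by an $R$-basis of $\xi_{\mu^{(1)}}J_R\xi_{\mu^{(2)}}\otimes_R\dots\otimes_R\xi_{\mu^{(k)}}J_R\xi_\lambda$, which is how $\#\Omega^+_k\left( \lambda,\mu \right)$ arises.

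There is no serious obstacle here: the proposition is a synthesis of Proposition~\ref{bar} (exactness), the idempotent structure of $S^+_R\left( n,r \right)$ (projectivity of the $S^+_R\left( n,r \right)\xi_\mu$), and Proposition~\ref{dominate} with Corollary~\ref{finite} (identification and finiteness of the terms). The only point requiring a word of care is the verification that the algebra-splitting hypothesis of Section~\ref{bar_constr} is met, i.e.\ that $L_{n,r}$ is a subalgebra of $S^+_R\left( n,r \right)$ with an ideal complement $J_R$ — and this was already checked in the text immediately after the definition of $S^+_R\left( n,r \right)$. So the proof is short: cite Proposition~\ref{bar} for exactness, cite the idempotent decomposition for projectivity of each term via~\eqref{eq:bk}, and conclude.
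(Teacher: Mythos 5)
Your proposal is correct and follows exactly the route the paper itself takes: the paper gives no separate proof of Proposition~\ref{bplusresolution}, but simply states it after having established exactness via Proposition~\ref{bar} and projectivity of each term via the decomposition~\eqref{eq:bk} into summands $S^+_R\left(n,r\right)\xi_\mu$. Your synthesis of Proposition~\ref{bar}, Proposition~\ref{dominate}, equation~\eqref{eq:bk}, and the idempotent structure matches the paper's argument verbatim.
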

Define
\begin{align*}
b_k\left( \lambda \right) := \left\{\, \left( \omega_0,\omega_1,\dots, \omega_k
\right) \,\middle|\,
\omega_0\in \Lambda^0\left( n,n;r \right),\ \left( \omega_1,\dots,\omega_k
\right)\in \Omega^+_k\left(\lambda, \left( \omega_0 \right)^1\right) \right\}. 
\end{align*}
	Then the set 
$$
\left\{\, \xi_{\omega_0}\otimes \dots \otimes \xi_{\omega_k} \,\middle|\, \left(
\omega_j \right)_{j=0}^k\in b_k\left( \lambda \right)\right\}
$$
is an $R$-basis of $B_k^+\left( R_\lambda \right)$. The differential $\partial$ of
$B^+_*\left( R_\lambda \right)$ in terms of this basis looks like
\begin{align}
	\label{eq:diff}
\partial_k	\left(  \xi_{\omega_0}\otimes \dots \otimes \xi_{\omega_k}
\right) = \sum_{t=0}^{k-1} \left( -1 \right)^t  \xi_{\omega_0}\otimes \dots
\otimes\xi_{\omega_t}\xi_{\omega_{t+1}}\otimes\dots\otimes \xi_{\omega_k}.
\end{align}
Let $R'$ be another commutative ring with identity and $\phi\colon R\to R'$ a homomorphism of
rings. It is now clear that
\begin{align*}
	B^+_*\left( R_\lambda \right)\otimes_R R' \cong B^+_*\left( {R'}_\lambda \right)
\end{align*}
as chain complexes of $R'$-modules. 

\section{Induction and Woodcock's theorem}
\label{induction}
In this section we explain how the results of Woodcock~\cite{woodcock} can be
applied to prove that the $S^+_R\left( n,r \right)$-module $R_\lambda$, $\lambda\in
\Lambda^+\left( n,r \right)$, is acyclic for the induction functor $S^+_R\left(
n,r \right)\lmod\to S_R\left( n,r \right)\lmod$. 
We start with a definition.

\begin{definition}
For $\lambda\in \Lambda^+\left( n,r \right)$ the $S_R\left( n,r \right)$-module
$W^R_\lambda := S_R\left( n,r \right)\otimes_{S^+_R\left( n,r \right)}R_\lambda$ is called
the \emph{Weyl module} for $S_R\left( n,r \right)$ associated with $\lambda$. 
\end{definition}
 It follows from Theorem~8.1~in~\cite{greencombinatorics}
that this definition is equivalent to the definition of Weyl module 
given in that work. It is proved in Theorem~7.1(ii) of \cite{greencombinatorics}
that  $W^R_\lambda$ is a free  $R$-module. In fact in this theorem
an explicit description of an $R$-basis for $W^R_\lambda$ is given. It turns out
that this basis does not depend on the ring $R$ but only on the partition
$\lambda$. Moreover the coefficients in the formulas for the action of $S_R\left( n,r \right)$ on
$W^R_\lambda$ are in  $\Z$ and do not depend on $R$. This
implies that if $\phi\colon R\to R'$ is a ring homomorphism then 
there is an isomorphism of $S_{R'}\left( n,r \right)$-modules
\begin{align}
	\label{eq:baseweyl}
	W^{R'}_{\lambda}\cong  W^R_{\lambda} \otimes_R R'.
\end{align}
For each $\lambda\in \Lambda^+\left( n;r \right)$ define the complex 
$$
B_*\left(W^R_\lambda \right) := S_R\left( n,r
\right)\otimes_{S^+_R\left( n,r \right)}
B^+_*\left( R_\lambda \right).
$$
Note that $B_{-1}\left( W^R_\lambda \right)= W^R_\lambda$. 
Now,
for any $\mu\in \Lambda\left( n;r
\right)$
\begin{align*}
S_R\left( n,r \right)\otimes_{S^+_R\left(n,r  \right) }S^+_R\left( n,r
\right) \xi_{\mu}   \cong S_R\left( n,r \right)\xi_\mu
\end{align*}
	is a projective $S_R\left( n,r \right)$-module.
	From \eqref{eq:bk} it follows that for $k\ge 1$ the $S_R\left( n,r
	\right)$-module $B_k\left( W^R_\lambda \right)$ is
	isomorphic to a direct sum of modules $S_R\left( n,r \right)\xi_\mu$,
	$\mu\strdominate \lambda$.
	Also $B_0\left( W^R_\lambda \right)\cong S_R\left( n,r
	\right)\xi_{\lambda}$. 
Thus $B_k\left( W^R_\lambda \right)$ is a projective $S_R\left( n,r \right)$-module for $k\ge 0$.

	Denote by $\widetilde{b^k}\left( \lambda \right)$ the set
	$$
	\left\{\, \left( \omega_0,\omega_1,\dots, \omega_k \right) \,\middle|\,
	\omega_0\in \Lambda\left( n,n;r \right),\ \left( \omega_1,\dots,\omega_k
	\right)\in \Omega^+_k\left( \lambda,\left(  \omega_0 \right)^1
	\right)\right\}.
	$$
	Then $B_0\left( W^R_\lambda \right)$ has an $R$-basis $\left\{\,
	\xi_{\omega}
	\,\middle|\,  \omega\in \Lambda\left( n;r \right),\ \omega^1 = \lambda
	\right\}$. 
	For $k\ge 1$ 
	\begin{align*}
	B_k\left(W^R_\lambda \right) = \bigoplus_{\substack{
	\mu^{\left( 1 \right)}\strdominate \dots \strdominate \mu^{\left( k \right)}\strdominate \lambda
	\\[1ex]
	\mu^{\left( s \right)}\in \Lambda\left( n;r \right),\ 1\le s\le k}
	} S_R\left( n,r
	\right)\xi_{\mu^{\left( 1 \right)}} \otimes_R \xi_{\mu^{\left( 1
	\right)}} J_R \xi_{\mu^{\left( 2 \right)}} \otimes_R \dots \otimes_R
	\xi_{\mu^{\left( k \right)}} J_R \xi_{\lambda}
\end{align*}
	 has an $R$-basis 
	$$
	\left\{\, \xi_{\omega_0}\otimes \xi_{\omega_1}\otimes \dots\otimes
	\xi_{\omega_k} \,\middle|\, \left( \omega_0,\dots,\omega_k \right)\in
	\widetilde{b_k}\left( \lambda \right) \right\}.
	$$
The differential $\partial$ of  $B_*\left( W^R_\lambda \right)$ in the terms of these bases is given
again by \eqref{eq:diff}. 	
From these facts it follows that, if $\phi\colon R'\to R$ is a homomorphism of
rings, then 
\begin{align}
	\label{eq:basechange}
	B_*\left( W^R_\lambda \right)\otimes_R R' \cong B_*\left( W^{R'}_\lambda
	\right). 
\end{align}
\begin{theorem}
	\label{woodcock}
	Let $\lambda\in \Lambda^+\left( n;r \right)$. 
	The complex  $B_*\left( W^R_\lambda \right)$ is a projective resolution of $W^R_\lambda$ over $S_R\left( n,r
	\right)$. 
\end{theorem}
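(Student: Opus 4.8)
The plan is to show that $B_*\left( W^R_\lambda \right)$ is exact; since we have already established that each $B_k\left( W^R_\lambda \right)$ is a projective $S_R\left( n,r \right)$-module and that $B_{-1}\left( W^R_\lambda \right) = W^R_\lambda$, exactness is all that remains. The difficulty is that the induction functor $S_R\left( n,r \right)\otimes_{S^+_R\left( n,r \right)} -$ is only right exact, so applying it to the exact complex $B^+_*\left( R_\lambda \right)$ from Proposition~\ref{bplusresolution} does \emph{not} automatically preserve exactness; the obstruction is measured by $\Tor^{S^+_R\left( n,r \right)}_i\left( S_R\left( n,r \right), R_\lambda \right)$ for $i\ge 1$. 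So the heart of the argument is to prove that $R_\lambda$ is \emph{acyclic} for induction, i.e.\ these higher Tor groups vanish.

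First I would reduce to a universal (characteristic-free) statement. By \eqref{eq:baseweyl}, \eqref{eq:basechange}, and the analogous base-change isomorphism for $B^+_*\left( R_\lambda \right)$ noted at the end of Section~\ref{borel}, the whole situation over $R$ is obtained from the situation over $\Z$ by applying $-\otimes_\Z R$. Since $B^+_*\left( R_\lambda \right)$ is, in each degree, a \emph{free} $\Z$-module (it has the explicit $\Z$-basis indexed by $b_k\left( \lambda \right)$), and $W^\Z_\lambda$ is $\Z$-free, standard universal-coefficient reasoning would let me conclude: if $B_*\left( W^\Z_\lambda \right)$ is exact with $\Z$-free terms, then tensoring with any $R$ keeps it exact. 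Thus it suffices to treat $R = \Z$, or even to treat $R$ an arbitrary field (exactness of a complex of $\Z$-free finitely generated modules can be checked after tensoring with $\Q$ and with each $\F_p$). This reduces the theorem to the case $R$ a field, which is precisely the setting of Woodcock~\cite{woodcock}.

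Over a field, I would invoke Woodcock's theorem: $R_\lambda$ is acyclic for the induction functor $S_R\left( n,r \right)\otimes_{S^+_R\left( n,r \right)} -$, equivalently $\Tor^{S^+_R\left( n,r \right)}_i\left( S_R\left( n,r \right), R_\lambda \right) = 0$ for $i\ge 1$. Granting this, apply the induction functor to the projective resolution $B^+_*\left( R_\lambda \right) \to R_\lambda$: because $B^+_k\left( R_\lambda \right)$ is a projective $S^+_R\left( n,r \right)$-module in each degree, the homology of $S_R\left( n,r \right)\otimes_{S^+_R\left( n,r \right)} B^+_*\left( R_\lambda \right)$ computes $\Tor^{S^+_R\left( n,r \right)}_*\left( S_R\left( n,r \right), R_\lambda \right)$, which is $W^R_\lambda$ concentrated in degree $0$ and $0$ elsewhere. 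That is exactly the assertion that $B_*\left( W^R_\lambda \right)$ is a resolution of $W^R_\lambda$ over a field, and combined with the reduction of the previous paragraph it gives the theorem for arbitrary commutative $R$.

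The main obstacle is the acyclicity of $R_\lambda$ over a field, i.e.\ the vanishing of the higher Tor groups; everything else is bookkeeping about base change and the fact that each $B_k$ is $\Z$-free and projective. In Woodcock's work this acyclicity is deduced from the good filtration / $\nabla$-filtration properties of $S_R\left( n,r \right)$ as an $S^+_R\left( n,r \right)$-module, together with the identification of $R_\lambda$ with the one-dimensional ``highest weight'' $S^+_R\left( n,r \right)$-module; I would cite \cite{woodcock} for this and explain why the identifications made in Section~\ref{borel} (that $S^+_R\left( n,r \right)$ here coincides with the Borel-Schur algebra $S\left( B^+ \right)$ of \cite{greencombinatorics}, hence with the one in \cite{woodcock} when $R$ is an infinite field) let us transport Woodcock's conclusion into the present notation. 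It would then remain only to note that passing from an infinite field to all fields, and thence to $\Z$ and to all commutative rings, is handled by the flatness/universal-coefficient argument above, using that the complexes in question consist of finitely generated $\Z$-free modules.
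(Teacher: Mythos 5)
Your proposal follows essentially the same strategy as the paper's proof: reduce to $R=\Z$ via base change, use the Universal Coefficient Theorem to pass to field coefficients, and then invoke Woodcock's acyclicity result over an infinite field. The paper merely fills in the two points you leave sketchy --- it tensors with $\overline{\F}_p$ (infinite, so Woodcock applies) rather than $\F_p$, and it supplies the bridge from what Woodcock's Theorem~5.1 actually asserts (exactness of a coinduction complex) to Tor-acyclicity for the induction functor, using the anti-involution $\involution$ together with Theorem~7.1 of~\cite{aps}.
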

\begin{proof}
	Fix $\lambda\in \Lambda^+\left( n;r \right)$ and denote the complex
	$B_*\left( W^R_\lambda \right)$ by $\X\left( R
	\right)$. Then all $R$-modules in $\X\left( R \right)$ are free
	$R$-modules.
	Moreover we know  
	 from \eqref{eq:basechange} that
	$\X\left( Z \right)\otimes_{\Z} R \cong \X\left( R \right)$. Now by the
	Universal Coefficient Theorem
	(cf. for example Theorem~8.22 in \cite{rotman}) we have a short exact sequence
	\begin{equation}	
		\label{eq:uct}
		0 \to H_k\left( \X\left( \Z \right) \right) \otimes_{\Z} R \to
		H_k\left(
		\X\left( R \right) \right) \to \Tor^{\Z}_1\left( H_{k-1}\left(
		\X\left(
		Z \right)
		\right), R \right)\to 0.
	\end{equation}
		Thus to show that the complexes $\X\left( R \right)$ are acyclic it is
	enough to check that the complex $\X\left( \Z \right)$ is acyclic. 
	Now $H_k\left( \X\left( \Z \right) \right)$ is a finitely generated abelian group.
	Therefore 
	$$
	H_k \left( \X\left( \Z \right) \right) \cong \Z^t \oplus
	\bigoplus_{p\mbox{ prime}} \bigoplus_{s\ge 1} \left(
	\left.\raisebox{0.3ex}{ $\Z$}\middle/\!\raisebox{-0.3ex}{ $p^s\Z$}\right.
	\right)^{t_{ps}},
	$$
	where only finitely many of the integers $t$, $t_{ps}$ are different from
	zero. For every prime $p$ denote by $\overline{ \F }_p$ the algebraic
	closure of $\F_p$. Then we get 
	$$
	H_k\left( \X\left( \Z \right) \right) \otimes_{\Z} \overline{ \F }_p \cong
	\overline{ \F }_p^{\sum_{s\ge 1} t_{ps}}
	$$
	and also
	$$
	H_k\left( \X\left( \Z \right) \right)\otimes_{\Z} \Q \cong \Q^t.
	$$
	Hence if we show that $H_k\left( \X\left( \Z \right)
	\right)\otimes_{\Z} \overline{ {\F} }_p = 0$ for all prime numbers
	$p$ and $H_k\left( \X\left( \Z \right) \right) \otimes_{\Z} \Q = 0$,
	this
	 will imply that $H_k\left( \X\left( \Z \right) \right) = 0$. 

	Let $\K$ be one of the fields $\overline{ \F }_p$, $p$ prime, or $\Q$. 
	Then, by the Universal Coefficient Theorem, $H_k\left( \X\left( \Z
	\right) \right)\otimes_{\Z} \K$ is a submodule of $H_k\left( \X\left(
	\K \right)
	\right)$. Therefore it is enough to show that $H_k\left( \X\left( \K
	\right) \right)=0 $. For this 
 we use Theorem~5.1~in~\cite{woodcock}. 
 That result can be applied because
$\K$ is an infinite field.

Note that the algebra $S_R\left( n,r \right)$ has an anti-involution
$\involution\colon
S_R\left( n,r \right)\to S_R\left( n,r \right)$ defined on the basis elements by
$\involution\colon \xi_{\omega}\mapsto \xi_{\omega^t}$. The image of $S^+_R\left( n,r
\right)$ under $\involution$ is the  subalgebra $S^-_R\left( n,r \right)$ of $S_R\left( n,r
\right)$. Now for each $S^+_\K\left( n,r \right)$-module $M$ we define a
structure of  
$\involution\left( S^+_\K\left( n,r \right) \right)$-module  on
$M^* := \Hom_\K\left( M,\K \right)$ by  $\left( \xi\theta \right)\left( m
\right) := \theta\left( \involution\left( \xi \right)m \right)$, for $\theta\in
M^* $, $\xi\in \involution\left( S^+_\K\left( n,r \right) \right)$. This induces
a contravariant equivalence of categories $\involution_*\colon S^+_\K\left(
n,r \right)\mbox{-mod} \to S^-_\K\left( n,r \right)\mbox{-mod}$.
There is also a similarly defined contravariant auto-equivalence functor
$\involution_*\colon S_\K\left( n,r \right)\mbox{-mod}\to S_\K\left( n,r
\right)\mbox{-mod}$. By Theorem~7.1~in~\cite{aps} the functors
\begin{align*}
\involution_*\circ
\Hom_{S^-_\K\left( n,r \right)} \left( S_\K\left( n,r \right), - \right)\circ
\involution_* \colon S^+_\K\left( n,r \right)\mbox{-mod} &
 \to S_\K\left( n,r \right)\mbox{-mod}
\\ S_\K\left( n,r \right)\otimes_{S^+_\K\left( n,r \right)} -\colon
S^+_\K\left( n,r \right)\mbox{-mod}&\to S_\K\left( n,r \right)\mbox{-mod}
\end{align*}
are isomorphic. Now since $\involution_*$ is a contravariant equivalence of
abelian categories, $\involution_*\left( B\left( \K_{\lambda}\right) \right)$
is an injective resolution of $\involution_*\left( \K_\lambda \right)$. By
Theorem~5.1~of~\cite{woodcock} the complex 
$$
\Hom_{S^-_\K\left( n,r \right)}\left(S_\K\left( n,r \right),  J_*\left( B_*\left(
{\K}_{\lambda} \right) \right) \right)
$$
is exact. Applying $\involution_*$ we get that also the complex 
$ \X\left( \K \right)$
is exact. 
\end{proof}
\begin{corollary}
	\label{woodcock2}
	Let $\lambda\in \Lambda^+\left( n;r \right)$ and $P\to R_\lambda$ be a projective resolution of $R_\lambda$ over
	$S^+_R\left( n,r \right)$. Then $S_R\left( n,r \right)\otimes_{S^+\left(
	n,r \right)} P\to W^R_\lambda$ is a projective resolution of
	$W^R_\lambda$ over $S_R\left( n,r \right)$. 
\end{corollary}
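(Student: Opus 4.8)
The plan is to read this off Theorem~\ref{woodcock} together with the standard fact that the left derived functors of the induction functor $F := S_R\left( n,r \right)\otimes_{S^+_R\left( n,r \right)} -$ are computed from \emph{any} projective resolution, so it suffices to know the answer on the single resolution $B^+_*\left( R_\lambda \right)$ already treated.

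First I would record that $F$ carries projective $S^+_R\left( n,r \right)$-modules to projective $S_R\left( n,r \right)$-modules: $F$ is additive, commutes with arbitrary direct sums, and $F\left( S^+_R\left( n,r \right) \right)\cong S_R\left( n,r \right)$, so $F$ sends a direct summand of a free $S^+_R\left( n,r \right)$-module to a direct summand of a free $S_R\left( n,r \right)$-module. Consequently, for an arbitrary projective resolution $P\to R_\lambda$ over $S^+_R\left( n,r \right)$, every term of $S_R\left( n,r \right)\otimes_{S^+_R\left( n,r \right)} P$ is a projective $S_R\left( n,r \right)$-module; and since $F$ is right exact, applying it to $P_1\to P_0\to R_\lambda\to 0$ gives $H_0\left( F(P) \right)\cong F\left( R_\lambda \right) = W^R_\lambda$, the isomorphism being induced by the augmentation $P_0\to R_\lambda$.

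Next I would invoke the independence of left derived functors from the chosen projective resolution: $H_k\left( F(P) \right)\cong L_k F\left( R_\lambda \right)$ for all $k\ge 0$, for every projective resolution $P$ of $R_\lambda$ (see, e.g.,~\cite{rotman}). Now apply this with the particular resolution $P = B^+_*\left( R_\lambda \right)$ of Proposition~\ref{bplusresolution}: by the very definition of $B_*\left( W^R_\lambda \right)$ one has $F\left( B^+_*\left( R_\lambda \right) \right) = B_*\left( W^R_\lambda \right)$, and Theorem~\ref{woodcock} asserts that this complex is a projective resolution of $W^R_\lambda$, so $H_k\left( B_*\left( W^R_\lambda \right) \right) = 0$ for $k\ge 1$ and $H_0 = W^R_\lambda$. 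Hence $L_k F\left( R_\lambda \right) = 0$ for $k\ge 1$, i.e. $R_\lambda$ is $F$-acyclic. Feeding this back into the previous step, for an arbitrary projective resolution $P\to R_\lambda$ we get $H_k\left( F(P) \right) = 0$ for $k\ge 1$ and $H_0\left( F(P) \right)\cong W^R_\lambda$ via the induced augmentation, so $S_R\left( n,r \right)\otimes_{S^+_R\left( n,r \right)} P\to W^R_\lambda$ is exact, hence a projective resolution of $W^R_\lambda$ over $S_R\left( n,r \right)$.

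There is essentially no obstacle beyond citing this homological-algebra folklore correctly: the actual content — the vanishing of the higher derived functors of the non-exact functor $F$ on the modules $R_\lambda$ — is precisely what Theorem~\ref{woodcock} delivers. If one prefers to avoid derived functors, the same argument runs via chain homotopy: by the comparison theorem any two projective resolutions of $R_\lambda$ over $S^+_R\left( n,r \right)$ are chain homotopy equivalent, the additive functor $F$ preserves chain homotopy equivalences, so $F(P)$ is chain homotopy equivalent to $F\left( B^+_*\left( R_\lambda \right) \right) = B_*\left( W^R_\lambda \right)$, which is exact by Theorem~\ref{woodcock}; the only care needed is to keep the equivalence compatible with the augmentations to $R_\lambda$ so that the degree-$0$ homology of $F(P)$ is identified with $W^R_\lambda$ through the natural map, after which the first step gives projectivity of the terms.
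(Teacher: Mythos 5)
Your proposal is correct and follows essentially the same route as the paper: both identify $H_k$ of the induced complex with $\Tor^{S^+_R(n,r)}_k\bigl(S_R(n,r),R_\lambda\bigr)$, observe that these are independent of the chosen projective resolution, and conclude their vanishing for $k\ge 1$ from the exactness of $B_*\bigl(W^R_\lambda\bigr)=F\bigl(B^+_*(R_\lambda)\bigr)$ asserted by Theorem~\ref{woodcock}. You spell out the folklore (preservation of projectives, independence of the resolution, $H_0\cong W^R_\lambda$) more explicitly than the paper's three-line proof, but the argument is the same.
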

\begin{proof}
	The homology groups of the complex $S_R\left( n,r \right)\otimes_{S^+\left(
	n,r \right)} P\to 0$ are the tor-groups $Tor^{S^+\left( n,r \right)}_k
	\left( S_R\left( n,r \right), R_\lambda \right)$. But these groups are
	zero  for $k\ge 1$ by  Theorem~\ref{woodcock}. And in degree zero we get
	$W_\lambda^R$.
\end{proof}
From Theorem~\ref{isodivided} it follows that the resolution $B_*\left(
W^R_\lambda
\right)$  is a resolution of $W^R_\lambda$ by divided power
modules over $\GL_n\left( R \right)$. Moreover, this resolution is universal in
the sense of Akin and Buchsbaum. Namely, we have
$$
B_*\left( W^R_\lambda \right) = B_*\left( W^\Z_\lambda \right)\otimes_{\Z} R.
$$
So this resolution answers the question posed by Akin and Buchsbaum referred in
the introduction of the paper.

\section{The Schur algebra and  the symmetric group}
\label{symmetric}
In this section we recall the connection which exists between the categories of
representations of the symmetric group $\Sigma_r$ and of the Schur algebra
$S_R\left( n,r \right)$. Good references on this subject are~\cite{green}
and~\cite{james}. 

Suppose that $n\ge r$. Then there is  $\delta= \left( 1,\dots,1,0,\dots,0
\right)\in \Lambda\left( n;r \right)$.
For every $\sigma\in \Sigma_r$ we denote  by $\omega\left( \sigma \right)$ the 
element of $\Lambda\left( n,n;r \right)$ defined by
$$
\omega\left( \sigma \right)_{st} := 
\begin{cases}
	1, &  1\le t\le r,\ s=\sigma t\\
	0, & \mbox{otherwise.}
\end{cases}
$$
This gives a one-to-one correspondence between $\Sigma_r$ and the elements
$\omega$ of
$\Lambda\left( n,n;r \right)$ such that $\omega^1=\omega^2=\delta$.
It follows from the Proposition~\ref{mult} that the map 
\begin{align*}
	\Sigma_r & \to \xi_{\delta}S_R\left( n,r \right)\xi_{\delta}\\
	\sigma & \mapsto \xi_{\omega\left( \sigma \right)}
\end{align*}
is multiplicative. In fact, if $\theta\in \Lambda\left( n,n,n;r \right)$ is such
that $\theta^3=\omega\left( \sigma_1 \right)$ and $\theta^1= \omega\left(
\sigma_2
\right)$, then 
$$
\theta_{stq} = 
\begin{cases}
	1 , & 1\le q\le r,\ s=\sigma_1 t,\ t= \sigma_2 q\\
	0, & \mbox{otherwise}
\end{cases}.
$$
Such $\theta$ is unique and for this $\theta$ we have $\theta^2 = \omega\left( \sigma_1\sigma_2 \right)$ and
$\left[ \theta \right]=1$. 
Therefore $\xi_{\omega\left(\sigma_1\right)}\xi_{\omega\left(\sigma_2\right)}=
\xi_{\omega\left(\sigma_1\sigma_2\right)}$. We can
consider $\xi_{\delta}S_R\left( n,r \right)\xi_{\delta}$ as an algebra with
identity
$\xi_\delta$. Then $\sigma\mapsto \omega\left( \sigma \right)$ induces an isomorphism of algebras
$\phi\colon R \Sigma_r \stackrel{\cong}{\to} 
\xi_\delta S_R\left( n,r \right)\xi_{\delta}$. 
It is obvious that if $M$ is an $S_R\left( n,r \right)$-module then
$\xi_{\delta} M$ is an $\xi_\delta S_R\left( n,r \right)\xi_\delta $-module. In fact the map $M\mapsto \xi_\delta M$ is functorial. Now
we can consider $\xi_\delta M$ as $R\Sigma_r$-module via $\phi$.
The resulting functor $\sf\colon S_R\left( n,r \right)\lmod \to R\Sigma_r
\lmod$ was named \emph{Schur functor} in~\cite{green}. 
This terminology should not be mixed up with Schur functors in~\cite{abw2}. 
It is obvious that $\sf$ is exact.

Given $\lambda\in\Lambda^+\left( n;r \right)$, we can apply the functor
$\sf$ to the complex $B_*\left( W^R_\lambda \right)$. We obtain an exact
sequence $\sf \left( B_*\left( W^R_\lambda \right) \right)$, which is a
resolution by permutation modules of the co-Specht module corresponding to $\lambda$.
This will be explained in more detail in the next section. 

\section{The Boltje-Hartmann complex}
\label{boltjehartmann}
\label{boltje}
In this section we show that for $\lambda\in \Lambda^+\left( n;r \right)$ the
complex  $\sf\left( B_*\left(
W^R_\lambda
\right) \right)$ is isomorphic to the complex constructed in~\cite{boltje}.
Thus we prove Conjecture~3.4 of~\cite{boltje}. 

We start by summarizing notation and conventions of~\cite{boltje}.

Let $\lambda\in \Lambda\left( n;r \right)$. The
\emph{diagram} of shape $\lambda$ is the subset of $\N^2$
\begin{align*}
	\left[ \lambda \right] = \left\{\, \left( s,t \right) \,\middle|\, 1\le
	t\le \lambda_s,\ 1\le s\le n\right\}, 
\end{align*}
and a \emph{tableau} of shape $\lambda$ or \emph{$\lambda$-tableau} is a map
$T\colon \left[ \lambda \right]\to \N$. The content $c\left( T \right)$ of $T$ is defined by
\begin{align*}
	c(T)_t = \# \left\{\, \left( s,q \right)\in \left[ \lambda \right]
	\,\middle|\,  T\left( s,q \right) = t \right\}. 
\end{align*}
If all values of $T$ are no greater than $n$ we consider $c\left( T
\right)$ as an element of $\Lambda\left( n;r \right)$. 
For every pair
$\lambda$, $\mu\in \Lambda\left( n;r \right)$, denote by $\tableau\left(
\lambda,\mu
\right)$ the set of tableaux of shape $\lambda$ and content $\mu$.
We say that a tableau $T\in \tableau\left( \lambda,\mu \right)$ is row
semistandard if for every $1\le s \le n$
$$
T\left( s,1 \right)\le T\left( s,2 \right)\le \dots\le T\left( s,\lambda_s
\right). 
$$
The set of row semistandard tableaux of shape $\lambda$ and content $\mu$ will
be denoted by $\tableau^{rs}\left( \lambda,\mu \right)$. 
Following~\cite{boltje} we denote by $\tableau\left( \lambda \right)$
(respectively $\tableau^{rs}\left( \lambda \right)$) the
set $\tableau\left( \lambda,\delta \right)$ (respectively $\tableau^{rs}\left( \lambda,\delta
\right)$), where $\delta=\left(
1^r,0^{n-r}
\right)$ like in the previous section. Note that every element
$\mathfrak{t}$ of $\tableau\left( \lambda \right)$ is a bijection from
$\left[ \lambda \right]$ to $\bfr$. Therefore we can
define a left action of $\Sigma_r$ on $\tableau\left( \lambda \right)$ by
\begin{align*}
	\Sigma_r \times \tableau\left( \lambda \right) & \to \tableau\left(
	\lambda
	\right)\\
\left( \sigma,\mathfrak{t} \right)& \mapsto \sigma
\mathfrak{t}.
\end{align*}
Define the projection $\rs\colon \tableau\left( \lambda \right)\to
\tableau^{rs}\left( \lambda \right)$ that turns $\mathfrak{t}\in
\tableau\left( \lambda\right)$ into the row semistandard tableau obtained
from $\mathfrak{t}$ by 
rearranging the elements of each row of $\mathfrak{t}$ in increasing order.
 Then 
\begin{align*}
	\Sigma_r\times \tableau^{rs}\left( \lambda \right) &\to
	\tableau^{rs}\left( \lambda \right)\\
	\left( \sigma,\mathfrak{t} \right) & \mapsto \rs\left(
	\sigma\mathfrak{t} \right)
\end{align*} 
is a left transitive action of $\Sigma_r$ on $\tableau^{rs}\left( \lambda
\right)$, and the stabilizer of \begin{align*}
	\mathfrak{t}_\lambda := 	\begin{array}{ccccccc}
		1 & \dots & \dots & \dots & \lambda_1\\
		\lambda_1 + 1 & \dots & \dots & \lambda_1 + \lambda_ 2\\
		\vdots & \vdots & \vdots \\
		\lambda_1 + \dots + \lambda_{n-1} + 1 & \dots & \lambda_1 +
		\dots + \lambda_n
	\end{array}
\end{align*}
is $\Sigma_\lambda = \Sigma_{\lambda_1}\times \dots \times \Sigma_{\lambda_n}$.
	Following~\cite{boltje}, we define the left permutational $\Sigma_r$- module $M^\lambda$
	 as the linear span over $R$ of the elements in
	$\tableau^{rs}\left( \lambda \right)$, with the action induced on
	$M^\lambda$ by the formula above. 

	Let $V_\lambda = \xi_\lambda\left( \left( R^n \right)^{\otimes r}
	\right) = \bigoplus_{i\in \lambda}Re_i$. Then $V_\lambda$ is a right $\Sigma_r$-submodule of $\left(
	R^n \right)^{\otimes r}$. We will consider $V_\lambda$ as a left
	$\Sigma_r$-module via the transitive $\Sigma_r$-action  $\sigma e_i =
	e_{i\sigma^{-1}}$ 
	on the basis $\left\{\, e_i \,\middle|\, i\in \lambda \right\}$. 
	 Obviously, $V_\lambda$ is a permutational module. 
	 Let $l\left( \lambda \right) := (1^{\lambda_1},\dots, n^{\lambda_n})$.
	Then the stabilizer of $e_{l\left( \lambda \right)}$ is
	$\Sigma_\lambda$, and so
	the $\Sigma_r$-modules $M^\lambda$ and $V_\lambda$ are
	isomorphic. 
	Let us describe the isomorphism induced by $e_{l\left( \lambda
	\right)}\to \mathfrak{t}_\lambda$ explicitly. 

	For every $i\in \lambda$, we define $\mathfrak{t}\left( i \right)\in
	\tableau^{rs}\left( \lambda \right)$ as the row semistandard
	$\lambda$-tableau whose row $s$ contains the $\lambda_s$ integers
	$q$ satisfying $i_q = s$, $s\in \bfn$.

	We have $\mathfrak{t}\left( l\left(
	\lambda \right)
	\right) = \mathfrak{t}_\lambda$.
	Next we 
	 check that the
	correspondence $e_{i}\mapsto \mathfrak{t}\left( i \right)$ is
	$\Sigma_r$-invariant.  

Let $i\in \lambda$, $\sigma\in \Sigma_r$ and $1\le s\le n$.  Suppose $s$ occurs
at positions $\nu_1 < \dots <\nu_{\lambda_s}$ in $i$. Then 
\begin{align*}
	s = i_{\nu_q}& = i_{\sigma^{-1}\left( \sigma\left( \nu_q \right)
	\right)}, & 1 \le q \le \lambda_s. 
\end{align*}
	Hence $s$ occurs at the positions $\sigma\left( \nu_1 \right)$, \dots,
	$\sigma\left( \nu_{\lambda_s} \right)$ in $i\sigma^{-1}$. 
	Since $s$ is arbitrary we get $\mathfrak{t}\left(i\sigma^{-1}  \right)
	=\rs\left(\sigma\mathfrak{t}\left( i \right)  \right) $. 
	Therefore the map $e_i\mapsto \mathfrak{t}\left( i \right)$ is
	$\Sigma_r$-invariant, and defines the $\Sigma_r$-isomorphism between
	$V_\lambda$ and $M^\lambda$ referred above.  

	As a consequence,
for $\lambda$, $\mu\in \Lambda\left( n;r \right)$ we get an
isomorphism $\Hom_{R\Sigma_r}\left( M^\mu,M^\lambda \right)\to
\Hom_{R\Sigma_r}\left( V^\mu,V^\lambda \right)$. Since $\left( R^n
\right)^{\otimes r} \cong \bigoplus_{\lambda\in \Lambda\left( n;r \right)}
V_\lambda$ these
isomorphisms can be assembled into an isomorphism of algebras
\begin{align}
	\label{eq:isom7}
\bigoplus_{\lambda,\mu\in\Lambda\left( n;r
\right)}\Hom_{R\Sigma_r}\left(M^\mu,M^\lambda
\right)\stackrel{\cong}{\longrightarrow} S_R\left( n,r \right),
\end{align}
where
we consider the direct sum on the left hand side as an algebra, with the product
of two composable maps given by their composition, and the product of two
non-composable maps is defined to be zero. 

We will describe \eqref{eq:isom7} explicitly below. 
Let
$$
\Omega\left( \lambda,\mu \right) =  \left\{\, \omega\in \Lambda\left( n,n;r
\right) \,\middle|\,  \omega^1 = \mu, \ \omega^2 = \lambda
\right\}.
$$
Then there is a bijection between the sets $\tableau^{rs}\left( \lambda,\mu \right)$
and $\Omega\left( \lambda,\mu \right)$. 
To prove this, we define for each  $T\in \tableau^{rs}\left( \lambda,\mu
\right)$  the matrix $\omega\left(
T \right)\in \Lambda\left( n,n;r \right)$ by
$$
\omega_{st} := \# \left\{\, \left( s,q \right)\in \left[ \lambda \right]
\,\middle|\, T\left( s,q \right) = t \right\} = \left(\mbox{number of $t$'s in row
$s$ of $T$}\right).
$$
Clearly $\omega\left( T \right)^2 = \lambda$ and $\omega\left( T \right)^1 =
\mu$. Thus $\omega\left( T \right) \in \Omega\left( \lambda,\mu \right)$.  
Conversely, if $\omega\in \Omega\left( \lambda, \mu \right)$ we define 
$T\left( \omega \right)$ as the tableau of shape $\lambda$ whose $s$-th row is the sequence 
$\left( 1^{\omega_{s1}}, \dots, n^{\omega_{sn}} \right)$. Then $T\left( \omega
\right)$ is row
semistandard. Moreover, $t$ occurs in
$T$ exactly $\left( \omega^1 \right)_t= \mu_t$ times, and so $T\left( \omega
\right)$ has content
$\mu$. 
It is easy to see that these constructions are mutually inverse.

For every $T\in \tableau^{rs}\left( \lambda,\mu \right)$ Boltje and Hartmann
define a map $\theta_T\colon M^\mu\to M^\lambda $ by the rule:
for $\mathfrak{t}\in \tableau^{rs}\left( \mu \right)$ the element
$\theta_{T}{\mathfrak{t}}\in M^\lambda $ is equal to the sum of all
$\lambda$-tableaux $\mathfrak{s}\in \tableau^{rs}\left( \lambda \right)$ with
the following property, for each $1\le s\le n$: if the $s$-th row of $T$ contains
precisely $q$ entries equal to $t$ then the $s$-th row of $\mathfrak{s}$
contains precisely $q$ entries from the $t$-th row of $\mathfrak{t}$.

We can reformulate this rule in terms of $\omega\left( T \right)$ as follows:
the element $\theta_{T} \mathfrak{t} $ is the sum of those $\mathfrak{s}\in
\tableau^{rs}\left( \lambda \right)$
that, for each $1\le s,t\le n$, the $s$-th row of $\mathfrak{s}$ contains
precisely $\omega\left( T \right)_{st}$ entries from the $t$-th row of
$\mathfrak{t}$. 

The set $\left\{\, \theta_T \,\middle|\, T\in \tableau^{rs}\left( \lambda,\mu
\right) \right\}$ is an  $R$-basis of $\Hom_{R\Sigma_r}\left( M^\mu,M^\lambda
\right)$. On the other hand $\left\{\, \xi_{\omega} \,\middle|\, \omega\in
\Omega\left( \lambda,\mu \right) \right\}$ is an $R$-basis of $\xi_\lambda
S_R\left( n,r \right)\xi_{\mu}$. Now we have the following result.

\begin{proposition}
	Under \eqref{eq:isom7} $\theta_T$, $T\in \tableau^{rs}\left( \lambda,\mu
	\right)$, corresponds to $\xi_{\omega\left( T \right)}$. 	
\end{proposition}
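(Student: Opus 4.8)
The plan is to verify that the isomorphism \eqref{eq:isom7} carries $\theta_T$ to $\xi_{\omega(T)}$ by tracing through the explicit description of both sides. Recall that \eqref{eq:isom7} is built from the component isomorphisms $\Hom_{R\Sigma_r}(M^\mu,M^\lambda)\to\Hom_{R\Sigma_r}(V_\mu,V_\lambda)$ induced by the $\Sigma_r$-isomorphisms $e_i\mapsto\mathfrak{t}(i)$, together with the identification of $\bigoplus_{\lambda,\mu}\Hom_{R\Sigma_r}(V_\mu,V_\lambda)$ with $S_R(n,r)$ coming from $(R^n)^{\otimes r}\cong\bigoplus_\lambda V_\lambda$ and $S_R(n,r)=\End_{R\Sigma_r}((R^n)^{\otimes r})$. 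Under this last identification, the block $\xi_\lambda S_R(n,r)\xi_\mu$ is exactly $\Hom_{R\Sigma_r}(V_\mu,V_\lambda)$, and $\xi_{\omega}$ for $\omega\in\Omega(\lambda,\mu)$ is the map $V_\mu\to V_\lambda$ sending $e_k\mapsto\sum_{(i,k)\in\omega}e_i$. So it suffices to show: under the isomorphisms $M^\mu\cong V_\mu$, $M^\lambda\cong V_\lambda$, the Boltje–Hartmann map $\theta_T$ agrees with the endomorphism $e_k\mapsto\sum_{(i,k)\in\omega(T)}e_i$.

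First I would fix $T\in\tableau^{rs}(\lambda,\mu)$ and set $\omega=\omega(T)$, and pick a basis element $\mathfrak{t}\in\tableau^{rs}(\mu)$, which under $M^\mu\cong V_\mu$ corresponds to $e_k$ for the unique $k\in\mu$ with $\mathfrak{t}(k)=\mathfrak{t}$ (so the $t$-th row of $\mathfrak{t}$ is the set $\{q : k_q=t\}$). On the $V$-side, $\xi_\omega$ sends $e_k$ to $\sum_{i : (i,k)\in\omega}e_i$, i.e. to $\sum_i e_i$ where the sum runs over all $i\in\lambda$ such that $\wt(i,k)=\omega$; by definition of the weight function this means $\#\{q : i_q=s,\ k_q=t\}=\omega_{st}$ for all $s,t$. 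Translating through $e_i\leftrightarrow\mathfrak{t}(i)$, the $s$-th row of $\mathfrak{t}(i)$ is $\{q : i_q=s\}$, and the condition $\#\{q : i_q=s,\ k_q=t\}=\omega_{st}$ says precisely that the $s$-th row of $\mathfrak{t}(i)$ contains exactly $\omega_{st}$ of the positions $q$ with $k_q=t$, i.e. exactly $\omega_{st}$ entries from the $t$-th row of $\mathfrak{t}$. That is exactly the defining property of the tableaux $\mathfrak{s}$ appearing in the expansion of $\theta_T\mathfrak{t}$ (in the reformulation given just above the proposition, with $\omega(T)_{st}=\omega_{st}$). Hence $\theta_T\mathfrak{t}$ and $\xi_\omega e_k$ correspond under $M^\lambda\cong V_\lambda$.

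To finish I would note that since both $\theta_T$ and $\xi_{\omega(T)}$ are $R$-linear and $\Sigma_r$-equivariant, and the basis $\tableau^{rs}(\mu)$ of $M^\mu$ corresponds under $M^\mu\cong V_\mu$ to the basis $\{e_k : k\in\mu\}$ of $V_\mu$, checking equality on this one basis element (or, by transitivity of the $\Sigma_r$-actions, on any one element) is enough, but in fact the argument above works uniformly for every $\mathfrak{t}$, so no equivariance shortcut is even needed. Finally, since $\{\theta_T : T\in\tableau^{rs}(\lambda,\mu)\}$ and $\{\xi_\omega : \omega\in\Omega(\lambda,\mu)\}$ are $R$-bases of the corresponding Hom-spaces and $T\mapsto\omega(T)$ is the bijection $\tableau^{rs}(\lambda,\mu)\to\Omega(\lambda,\mu)$ established earlier, the correspondence $\theta_T\leftrightarrow\xi_{\omega(T)}$ extends to the claimed identification of algebras in \eqref{eq:isom7}.

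The only real subtlety — and the step I'd be most careful about — is bookkeeping the three layers of identification correctly: (i) the transpose/orientation conventions built into $e_i\mapsto\mathfrak{t}(i)$ (rows of the tableau indexed by the value $s$, columns by position), (ii) the left-versus-right $\Sigma_r$-action twist $\sigma e_i = e_{i\sigma^{-1}}$ used to turn $V_\lambda$ into a left module, and (iii) the direction of composition in \eqref{eq:isom7} so that $\Hom(M^\mu,M^\lambda)$ lands in $\xi_\lambda S_R(n,r)\xi_\mu$ rather than its transpose. Once these are pinned down, matching the ``$\omega_{st}$ entries from the $t$-th row'' condition with ``$\wt(i,k)=\omega$'' is a direct unwinding of the weight function.
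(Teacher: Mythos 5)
Your proposal is correct and follows essentially the same line as the paper's proof: apply $\xi_{\omega(T)}$ to a basis vector $e_k$ (the paper writes $e_j$), read off from $\wt(i,k)=\omega(T)$ the condition $\#\{q: i_q=s,\ k_q=t\}=\omega(T)_{st}$, and translate this through $e_i\leftrightarrow\mathfrak{t}(i)$ into the Boltje--Hartmann rule that the $s$-th row of $\mathfrak{s}$ contain exactly $\omega(T)_{st}$ entries from the $t$-th row of $\mathfrak{t}$. The extra remarks on equivariance, bases, and the bijection $T\mapsto\omega(T)$ are harmless but not needed, as the pointwise computation already works for every $\mathfrak{t}$.
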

\begin{proof}
Let $T\in \tableau^{rs}\left( \lambda,\mu \right)$ and 
$j\in \mu$.  Then
$$\xi_{\omega\left( T \right)} e_j = \sum_{\left( i,j \right)\in\omega\left( T
\right)} e_i.$$ 
Now for all $1\le s,t\le n$
\begin{align*}
	\#\left\{\, 1\le q \le r \,\middle|\, i_q=s,\ j_q=t \right\} 
&
	= \wt\left( i,j \right)_{st} = \omega\left( T \right)_{st}.
\end{align*}
Thus $\xi_{\omega\left(T\right)}e_j$ is the sum of
$e_i$, for those $i\in \lambda$ such that $i$ is obtained from $j$ by replacing
$\omega\left( T \right)_{st}$ occurences of $t$ by $s$. 
Therefore $\mathfrak{t}\left( i
\right)$ is obtained from $\mathfrak{t}\left( j \right)$ by moving exactly
$\omega\left( T \right)_{st}$ elements from the $t$-th row of
$\mathfrak{t}\left( j \right)$ to the $s$-th row of $\mathfrak{t}\left( i
\right)$.  

Moreover, if $\mathfrak{s}\in \tableau^{rs}\left( \lambda \right)$ is obtained
from $\mathfrak{t}\left( j \right)$ by moving exactly $\omega\left( T
\right)_{st}$ elements from the $t$-th row of $\mathfrak{t}\left( j
\right)$ to $s$-th row of $\mathfrak{s}$, then for the corresponding $i = i\left(
\mathfrak{s}
\right)\in \lambda$ we have $\left( i,j \right)\in \omega\left( T
\right)_{st}$. This proves the proposition. 
\end{proof}

Boltje and Hartmann define $\Hom_{R \Sigma_r }^\wedge\left(
M^\mu,M^\lambda
\right)$ as the $R$-span of those elements $\theta_T$ with $T\in \tableau^{rs}\left(
\lambda,\mu
\right)$ such that for every $1\le s \le n$ the $s$-th row of $T$ does not
contain any entry smaller than $s$. This is equivalent to the requirement
that $\omega\left( T \right)$ is an upper triangular matrix. 
Therefore,
under \eqref{eq:isom7} the subspace $\Hom_{R \Sigma_r
}^\wedge\left( M^\mu,M^\lambda \right)$ is mapped to $\xi_{\lambda}
S^+\left( n,r \right)\xi_{\mu}$,
since we know that $\xi_\lambda S^+_R\left( n,r \right)\xi_\mu$ is the $R$-linear
span of
\begin{align*}
	\left\{\, \xi_{\omega} \,\middle|\,\mbox{$\omega$ is upper triangular
	and }  \omega\in \Omega\left( \lambda,\mu
	\right) \right\}.
\end{align*}
As $S^+_R\left( n,r \right)\cong J_R\left( n,r \right)\oplus L_{n,r}$, we have
$\Hom^\wedge_{R\Sigma_r}\left( M^\mu, M^\lambda \right)\cong \xi_\lambda
J_R\left( n,r \right)\xi_\mu$ if $\mu\strdominate \lambda$. 

In Section~3.2 of~\cite{boltje} there is defined
the complex $\widetilde{C}_*^\lambda$ 
as follows. $\widetilde{C}_{-1}^\lambda$ is the co-Specht module that
corresponds to the partition $\lambda\in \Lambda^+\left( n;r \right)$,
$\widetilde{C}_0^\lambda = \Hom_{R}\left( M^\lambda, R \right)$. For $k\ge 1$ the
$R\Sigma_r$-module $\widetilde{C}_k^\lambda$ is defined as the direct sum over all sequences
$\mu^{\left( 1 \right)}\strdominate \dots \strdominate \mu^{\left( k
\right)}\strdominate \lambda$
\begin{align}
	\label{eq:summand}
	\Hom_R\left( M^{\mu^{\left( 1 \right)}}, R \right)\otimes_R
	\Hom_{R\Sigma_r}^\wedge\left(
	M^{\mu^{\left( 2 \right)} },M^{\mu^{\left( 1 \right)}} 
	\right) \otimes_R \dots \otimes_R \Hom_{R\Sigma_r}^\wedge\left(
	M^\lambda,
	M^{\mu^{\left( k \right)}} \right). 
\end{align}
	The differential $d_k$, $k\ge 1$,  in $\widetilde{C}_*^\lambda$ is given by the formula
\begin{align}
	\label{eq:diff2}
	d_k \left( f_0 \otimes f_1 \otimes \dots \otimes f_{k-1} \right) =
	\sum_{t=0}^k
	\left( -1 \right)^t f_0 \otimes \dots \otimes f_t f_{t+1} \otimes \dots
	f_{k-1}.
\end{align}
	Note that we have arranged the factors in the definition of $\widetilde{C}_k^\lambda$ in
	a different order  from the one used  in \cite{boltje}.

\begin{theorem}
	\label{thm:boltje}
	For $\lambda\in \Lambda^+\left( n;r \right)$, 
	the complex $\widetilde{C}_*^\lambda$ is isomorphic to $\sf B_*\left( W^R_\lambda
	\right)$. 
\end{theorem}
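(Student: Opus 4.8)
The plan is to construct the isomorphism degree by degree, using the fact that $\sf$ is applied to the explicit description of $B_*(W^R_\lambda)$ obtained in Section~\ref{induction}. First I would identify the degree-$k$ terms on both sides. We have seen that $B_k(W^R_\lambda)$ is a direct sum, over chains $\mu^{(1)}\strdominate\dots\strdominate\mu^{(k)}\strdominate\lambda$, of the modules
$$
S_R(n,r)\xi_{\mu^{(1)}}\otimes_R \xi_{\mu^{(1)}}J_R\xi_{\mu^{(2)}}\otimes_R\dots\otimes_R\xi_{\mu^{(k)}}J_R\xi_\lambda .
$$
Applying $\sf=\xi_\delta(-)$ and using that $\xi_\delta S_R(n,r)\xi_{\mu^{(1)}}\cong\Hom_{R\Sigma_r}(M^{\mu^{(1)}},M^\delta)=\Hom_{R\Sigma_r}(M^{\mu^{(1)}},R)$ via the isomorphism~\eqref{eq:isom7} (note $M^\delta\cong R$ is the trivial module, so $\Hom_{R\Sigma_r}(M^{\mu^{(1)}},M^\delta)\cong\Hom_R(M^{\mu^{(1)}},R)$ since every $R$-linear map out of a module spanned by an orbit lands in the fixed points after symmetrization — more precisely $\sf$ of the projective $S_R(n,r)\xi_{\mu^{(1)}}$ is exactly $\Hom_R(M^{\mu^{(1)}},R)$), together with the identification $\Hom^\wedge_{R\Sigma_r}(M^\mu,M^\nu)\cong\xi_\nu J_R(n,r)\xi_\mu$ for $\nu\strdominate\mu$ established just before the theorem, one matches the $k$-th term of $\sf B_*(W^R_\lambda)$ summand-by-summand with~\eqref{eq:summand}, hence with $\widetilde{C}_k^\lambda$.

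Next I would check that these term-wise isomorphisms commute with the differentials. Both differentials are given by the same alternating-sum formula: \eqref{eq:diff} for $B_*^+$ (and hence for $B_*(W^R_\lambda)$, as noted after the displayed basis), and~\eqref{eq:diff2} for $\widetilde{C}_*^\lambda$, in each case the inner face maps being multiplication $\xi_{\omega_t}\xi_{\omega_{t+1}}$ resp. composition $f_tf_{t+1}$. So the key point is that~\eqref{eq:isom7} is an algebra isomorphism: multiplication of basis elements $\xi_\omega$ in $S_R(n,r)$ corresponds, under~\eqref{eq:isom7}, to composition of the maps $\theta_T$ in $\bigoplus_{\lambda,\mu}\Hom_{R\Sigma_r}(M^\mu,M^\lambda)$. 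The Proposition immediately preceding the theorem identifies $\theta_T\leftrightarrow\xi_{\omega(T)}$, and since~\eqref{eq:isom7} is asserted to be an isomorphism of algebras, composition on one side equals multiplication on the other. This forces the term-wise maps to be chain maps. I would also note that the outermost face (the $t=0$ term involving the $S_R(n,r)$- resp. $R$-module structure) is handled the same way: the action of $\xi_\lambda S_R(n,r)\xi_\mu$ on $\xi_\delta S_R(n,r)\xi_\lambda=\Hom_R(M^\lambda,R)$ matches the action of $\Hom^\wedge$ on $\Hom_R(M^\lambda,R)$ in~\cite{boltje}.

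Finally, in degree $-1$: by Theorem~\ref{woodcock} the complex $B_*(W^R_\lambda)$ is a resolution of $W^R_\lambda$, and $\sf$ is exact, so $\sf B_*(W^R_\lambda)$ is a resolution of $\sf W^R_\lambda$. One then invokes the known fact that $\sf$ takes the Weyl module $W^R_\lambda$ to the co-Specht module associated with $\lambda$ (this is classical; see~\cite{green},~\cite{james}, and it matches the definition $\widetilde{C}_{-1}^\lambda$ in~\cite{boltje}), so the augmentations agree. Assembling the degreewise isomorphisms gives an isomorphism of complexes $\widetilde{C}_*^\lambda\cong\sf B_*(W^R_\lambda)$.

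I expect the main obstacle to be the bookkeeping in the term-by-term matching: one must be careful that the ordering of tensor factors used here (flagged in the remark after~\eqref{eq:diff2} as differing from~\cite{boltje}) is the one that makes composition, rather than composition-in-reverse, the relevant operation, and that the trivial-module identification $\sf(S_R(n,r)\xi_\lambda)\cong\Hom_R(M^\lambda,R)$ is set up with the correct variance so that $\widetilde{C}_0^\lambda=\Hom_R(M^\lambda,R)$ comes out right. The algebraic content — that multiplication of $\xi_\omega$'s corresponds to composition of $\theta_T$'s — is already packaged in~\eqref{eq:isom7} and the preceding Proposition, so no genuinely new computation is needed; it is purely a matter of lining up the combinatorial data correctly.
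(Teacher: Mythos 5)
Your overall plan (match degree by degree via \eqref{eq:isom7}, check compatibility of differentials, handle degree $-1$ by exactness) is the same as the paper's, but there is a genuine error and a correspondingly genuine gap in the key step.

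You assert that $M^\delta\cong R$ is the \emph{trivial} $\Sigma_r$-module. It is not. Since $\delta=(1^r,0^{n-r})$, the row semistandard $\delta$-tableaux are the bijections $[\delta]\to\mathbf{r}$, on which $\Sigma_r$ acts simply transitively; hence $M^\delta\cong R\Sigma_r$ is the \emph{regular} module. With this corrected, the identification you want, $\Hom_{R\Sigma_r}\left(M^\nu,M^\delta\right)\cong\Hom_R\left(M^\nu,R\right)$, is the statement that $\Hom_{R\Sigma_r}\left(-,R\Sigma_r\right)$ and $\Hom_R\left(-,R\right)$ agree on $R$-free modules of finite rank, and this identification must moreover be compatible with the right $\Sigma_r$-action (coming from postcomposition with $\theta_{T(\omega(\sigma))}$ on one side and with the dual action on $\Hom_R\left(M^\nu,R\right)$ on the other) and natural in $\nu$ with respect to precomposition by elements of $\Hom_{R\Sigma_r}\left(M^\mu,M^\nu\right)$. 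This is exactly the content the paper supplies: it first passes to an intermediate complex $\widehat{C}_*^\lambda$ built with $\Hom_{R\Sigma_r}\left(M^{\mu^{(1)}},M^\delta\right)$ in place of $\Hom_R\left(M^{\mu^{(1)}},R\right)$ (which \eqref{eq:isom7} identifies directly with $\sf B_*\left(W^R_\lambda\right)$), and then constructs an explicit isomorphism $\phi_\nu\colon\Hom_{R\Sigma_r}\left(M^\nu,M^\delta\right)\to\Hom_R\left(M^\nu,R\right)$ (by extracting the coefficient of $\mathfrak{t}_\delta$), verifies injectivity, surjectivity via a section $\psi_\nu$, $\Sigma_r$-equivariance (this requires computing how $\theta_{T(\omega(\sigma))}$ permutes the basis $\sigma'\mathfrak{t}_\delta$ of $M^\delta$), and naturality $\phi_\mu(fh)=\phi_\nu(f)h$. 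Your remark that ``no genuinely new computation is needed'' is therefore wrong: the bulk of the paper's proof is precisely this new computation, and your $M^\delta\cong R$ claim (together with the ``symmetrization'' justification, which is also incorrect even granting the false premise) skips it entirely.

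A smaller divergence: for degree $-1$ you appeal to the classical fact that $\sf$ sends the Weyl module to the co-Specht module; the paper instead uses exactness of $\sf B_*\left(W^R_\lambda\right)$ (from Theorem~\ref{woodcock}) together with exactness of $\widetilde{C}^\lambda_*$ in degrees $0$ and $-1$ (from Boltje--Hartmann's Theorems 4.2 and 4.3) to force the degree $-1$ isomorphism once the non-negative degrees are matched. Both are acceptable, but you should be careful with the variance conventions if you take the classical route, since which of Specht / co-Specht appears depends on the normalization of the Schur functor and of $W^R_\lambda$, and a mismatch there would invalidate the claim.
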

\begin{proof}
We will establish the isomorphism only in non-negative degrees. The
isomorphism in the degree $-1$ will follow, since the complex $\sf B_*\left(
W^R_\lambda \right)$ is exact, and the complex $\widetilde{C}^\lambda_*$ is exact in
degrees $0$ and $-1$ by Theorems~4.2 and~4.3 in~\cite{boltje}.

We define the complex $\widehat{C}_*^\lambda$ in the same way as the complex
$\widetilde{C}^\lambda_*$ with the only difference that the summands \eqref{eq:summand} are
replaced by 
\begin{multline*}
	\Hom_{R\Sigma_r}\left( M^{\mu^{\left( 1 \right)}}, M^\delta
	\right)\otimes_R
	\Hom_{R\Sigma_r}^\wedge\left(
	M^{\mu^{\left( 2 \right)} },M^{\mu^{\left( 1 \right)}} 
	\right)\otimes_R\\  \otimes_R \dots \otimes_R \Hom_{R\Sigma_r}^\wedge\left(
	M^\lambda,
	M^{\mu^{\left( k \right)}} \right). 
\end{multline*}
Then it is straightforward that the isomorphism \eqref{eq:isom7} induces an 
isomorphism between the  complexes $\sf B_*\left( W^R_\lambda \right)$ and
$\widehat{C}_*^\lambda$ in non-negative degrees. 

To show that the complexes $\widehat{C}_*^\lambda$ and $\widetilde{C}_*^\lambda$ are
isomorphic in the non-negative degrees it is enough to find for every
$\nu\in \Lambda\left( n;r \right)$ an isomorphism of
$\Sigma_r$-modules $\phi_\nu\colon \Hom_{R\Sigma_r}\left(M^\nu,M^\delta
\right)\to \Hom_R\left( M^\nu,R \right)$, such that for all $\mu\in
\Lambda\left( n;r \right)$, $f\in \Hom_{R\Sigma_r}\left( M^\nu,M^\delta
\right)$, and $h\in \Hom_{R\Sigma_r}\left( M^\mu,M^\nu \right)$
we have
$\phi_\mu\left( fh  \right) = \phi_\nu\left( f \right)h$. 

Note that the action of $\Sigma_r$ on $\Hom_{R\Sigma_r}\left( M^\nu,M^\delta
\right)$ is given by composition with $\theta_{T\left( \omega\left( \sigma
\right) \right)}$, $\sigma\in \Sigma_r$. 
The action of $\Sigma_r$ on $\Hom_R\left( M^\nu, R \right)$ is given by the
formula $\left( \sigma f \right)\left( m \right) = f\left( \sigma^{-1}m
\right)$, for $f\in \Hom_R\left( M^\nu, R \right)$, $m\in M^\nu$, and
$\sigma \in \Sigma_r$. 

Let $f\in \Hom_{R\Sigma_r}\left( M^\nu, M^\delta \right)$, $m\in M^\nu$.
We define $\phi_\nu\left( f \right)\left( m \right)$ to be the coefficient
of $\mathfrak{t}_\delta$ in $f\left( m \right)$. Note that $f$ can be recovered
from $\phi_\nu \left( f \right)$ in a unique way. In fact, $\left\{\, \sigma
\mathfrak{t}_\delta \,\middle|\,
\sigma \in \Sigma_r \right\}$ is an $R$-basis of $M^\delta$. Now the coefficient of
$\sigma \mathfrak{t}_\delta$ in $f\left( m \right)$ is the same as the
coefficient of $\mathfrak{t}_\delta$ in $\sigma^{-1} f\left( m \right) = f\left(
\sigma^{-1}m \right)$, and so equals $  \phi_\nu\left( f \right)\left( \sigma^{-1}m \right)$. This
shows that $\phi_\nu$ is injective. 

Now let $g\in \Hom_R\left( M^\nu, R \right)$. We define $\psi_\nu\left(
g \right)\in \Hom_{R\Sigma_r}\left( M^\nu, M^\delta \right)$ by 
$$
m \mapsto \sum_{\sigma\in \Sigma_r} g\left( \sigma^{-1} m \right)( \sigma
\mathfrak{t}_\delta).
$$
We have to check that $\psi_\nu\left( g \right)$ is $\Sigma_r$-invariant.
Let $\sigma'\in \Sigma$. Then 
\begin{align*}
	\psi_\nu\left( g \right)\left( \sigma' m  \right)& = \sum_{\sigma\in
	\Sigma_r} g\left( \sigma^{-1} \sigma' m \right) \left( \sigma
	\mathfrak{t}_\delta \right) 
	\stackrel{\left( \sigma'' \right)^{-1} = \sigma^{-1}
	\sigma'}{\Relbar\!\Relbar\!\Relbar\!\Relbar\!\Relbar\!\Relbar\!\Relbar\!\Relbar\!\Relbar\!}
	\sum_{\sigma''\in \Sigma_r} g\left( \left( \sigma'' \right)^{-1} m
	\right)\left(\sigma' \sigma''\mathfrak{t}_\delta \right)\\
	&= 
	\sigma' \left( \psi_\nu\left( g \right) \left( m
	\right)\right).
\end{align*}
Since all the elements $\sigma \mathfrak{t}_\delta$, $\sigma\in \Sigma_r$,  are
linearly independent, we get that $\phi_\nu\left( \psi_\nu \left( g \right)
\right)\left( m \right) = g\left( m \right)$ for all $g\in \Hom_R\left(
M^\nu, R
\right)$ and $m\in M^\nu$. Therefore $\phi_\nu$ is surjective. 

Now we check that $\phi_\nu$ is a homomorphism of $\Sigma_r$-modules. For
this we have to see that for all $\sigma\in \Sigma_r$, $f\in
\Hom_{R\Sigma_r}\left( M^\nu, M^\delta \right)$, and $m\in M^\nu$ there holds
\begin{align}
	\label{eq:psi}
	\phi_\nu\left( \theta_{T\left( \omega\left( \sigma \right) \right)}f
	\right)\left( m \right) = \phi_\nu\left( f
	\right)\left(\sigma^{-1}m\right).
\end{align}
We will show that $\theta_{T\left( \omega\left( \sigma \right) \right)}$ acts by
permutation on the basis $\left\{\, \sigma'\mathfrak{t}_\delta \,\middle|\,
\sigma'\in \Sigma_r \right\}$ of $M^\delta$. Then $\phi_\nu\left(
\theta_{T\left( \omega\left( \sigma \right) \right)}f
\right)\left( m \right)$ will be the  coefficient of some
$\sigma'\mathfrak{t}_\delta$ in $f\left( m \right)$. 

Let $\sigma'\in \Sigma_r$. Then $\sigma'\mathfrak{t}_\delta$ is obtained
from $\mathfrak{t}_\delta$ by applying $\sigma'$ to every entry of
$\mathfrak{t}_\delta$. Thus $\sigma' \mathfrak{t}_\delta$ is a $\delta$-tableau 
with the entry $\sigma'\left( s \right)$ in the row $s$. 

We know that $\omega\left( \sigma \right)_{st}$ is non-zero only if $t =
\sigma^{-1}s$. Now by our reformulation of Boltje-Hartmann rule,
$\theta_{T\left( \omega\left( \sigma \right) \right)} \left( \sigma'
\mathfrak{t}_\delta \right)$ is the sum of those $\mathfrak{s}$ for which the
$s$-th row of $\mathfrak{s}$ contains precisely one entry from the
$\sigma^{-1} s$ row of $\sigma' \mathfrak{t}_\delta$. Of course such
$\mathfrak{s}$ is unique and the entry in the row $s$ is $\sigma'\left(
\sigma^{-1}s
\right)$. Thus $\theta_{T\left( \omega\left( \sigma \right) \right)}\left(
\sigma' \mathfrak{t}_\delta
\right) = \sigma'\sigma^{-1} \mathfrak{t}_\delta$.  

Therefore the coefficient of $\mathfrak{t}_\delta$ in $\theta_{T\left(
\omega\left( \sigma \right)
\right)} f\left( m \right)$ is the same as coefficient of $\sigma
\mathfrak{t}_\delta$ in $f\left( m \right)$, which is also the
coefficient of $\mathfrak{t}_\delta$ in $f\left( \sigma^{-1}m \right)$. 
So \eqref{eq:psi} holds. 

It is left to check that for every $\nu$, $\mu\in \Lambda\left( n;r
\right)$, and $h\in \Hom_{R\Sigma_r}\left( M^\mu,M^\nu \right)$, $f\in
\Hom_{R\Sigma_r}\left( M^\nu, M^\delta \right)$, we have $\phi_\mu\left(
fh \right) = \phi_\nu \left( f \right) h$.  But this is
immediate since both
$\phi_\mu\left( fh \right)\left( m \right)$ and $\phi_\nu\left( f
\right)\left( h\left( m \right) \right)$ are the coefficients of
$\mathfrak{t}_\delta$ in $fh\left( m \right)$, for every $m\in M^{\mu}$.  
\end{proof}

\appendix

\section{Appendix}
\label{genlin}
In this appendix we discuss  the connection between the general linear group and Schur
algebras as they are defined in this paper. We start with the proof that our
definition of Schur algebra is equivalent to the one given in Green~\cite{greencombinatorics}.

We define $A_R\left( n \right)$ to be the commutative ring $R\left[ c_{s,t}\colon
s,t\in \bfn
\right]$ in the indeterminates $c_{st}$.

For every $i$, $j\in I\left( n,r \right)$ we denote by $c_{i,j}$ the product
$c_{i_1,j_1}\dots c_{i_r,j_r}$. Then $\left( i,j \right)$ and $\left( i',j'
\right)\in I\left( n,r \right)\times I\left( n,r \right)$ are on the same
$\Sigma_r$-orbit if and only if $c_{i,j} = c_{i',j'}$. We will write
$c_{\omega}$ for  $c_{i,j}$ if $\left( i,j \right)\in \omega$.

Denote by $A_R\left( n,r \right)$ the $R$-submodule of $A_R\left( n
\right)$
of all homogeneous polynomials of degree $r$ in the $c_{st}$. Then $\left\{\,
c_\omega \,\middle|\, \omega\in \Lambda\left( n,n;r \right) \right\}$ is an
$R$-basis of $A_R\left( n,r \right)$ and $A_n = \bigoplus_{r\ge 0} A_R\left(
n,r \right)$. It is well known (see~\cite{greencombinatorics}) that $A_R\left( n,r
\right)$ has the  structure of 
a  coassociative
coalgebra with the structure maps given by 
\begin{align*}
	\triangle \left( c_{i,j} \right) &:= \sum_{k\in I\left( n,r \right)}
	c_{i,k}\otimes c_{k,j},  & \varepsilon\left( c_{i,j} \right)&:=
	\delta_{i,j}.
\end{align*}
	\emph{The Schur algebra } $S^{Gr}_R\left( r,n \right)$ in the sense of
	Green is the $R$-algebra dual to the coalgebra $A_R\left( n,r
	\right)$. Let $\left\{\, \widehat{\xi}_\omega \,\middle|\, \omega\in
	\Lambda\left( n,n;r \right) \right\}$ be the $R$-basis of
	$S^{Gr}_R\left( n,r \right)$ dual to $\left\{\, c_\omega \,\middle|\,
	\omega\in \Lambda\left( n,n;r \right)
	\right\}$. 
	Define a  map $\phi$ from $S^{Gr}_R\left( n,r \right)$ to $\End_R\left(
	\left( R^n \right)^{\otimes r} \right)$ by 
	$$
	\phi\left( f \right) e_j := \sum_{i\in I\left( n,r \right)} f\left(
	c_{ i,j }
	\right) e_i.
	$$
	\begin{theorem}
		\label{equivalence}
		The map $\phi$ provides  an $R$-algebra isomorphism between
		$S^{Gr}_R\left( n,r \right)$ and $S_R\left( n,r \right)$.
	\end{theorem}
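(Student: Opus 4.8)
The strategy is to verify directly that $\phi$ is a well-defined $R$-linear map, that it sends the basis $\{\widehat{\xi}_\omega\}$ to the basis $\{\xi_\omega\}$ of $S_R(n,r)$, and that it respects products; the compatibility with products is where the real content lies, since it amounts to matching the comultiplication of $A_R(n,r)$ with composition in $\End_R((R^n)^{\otimes r})$. First I would note that for $f\in S^{Gr}_R(n,r)$ the formula $\phi(f)e_j=\sum_i f(c_{i,j})e_i$ defines an $R$-linear endomorphism of $(R^n)^{\otimes r}$, so $\phi$ is an $R$-linear map into $\End_R((R^n)^{\otimes r})$. The key point for \emph{well-definedness into $S_R(n,r)$} is $\Sigma_r$-equivariance: since $(i,j)$ and $(i',j')$ lie in the same orbit iff $c_{i,j}=c_{i',j'}$, the matrix entries of $\phi(f)$ depend only on the orbit of $(i,j)$, and a short computation with $e_i\sigma=e_{i\sigma}$ shows $\phi(f)$ commutes with the $\Sigma_r$-action, hence $\phi(f)\in S_R(n,r)$.

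Next I would compute $\phi$ on the dual basis. By definition $\widehat{\xi}_\omega(c_\pi)=\delta_{\omega\pi}$, so $\phi(\widehat{\xi}_\omega)e_j=\sum_{i}\widehat{\xi}_\omega(c_{i,j})e_i=\sum_{i:\,(i,j)\in\omega}e_i$, which is exactly $\xi_\omega e_j$ by the definition $\xi_\omega=\sum_{(i,j)\in\omega}e_{i,j}$. Hence $\phi(\widehat{\xi}_\omega)=\xi_\omega$ for every $\omega\in\Lambda(n,n;r)$, so $\phi$ carries an $R$-basis bijectively to an $R$-basis and is therefore an $R$-module isomorphism. It remains only to check that $\phi$ is a ring homomorphism (it automatically sends the identity of $S^{Gr}_R(n,r)$, which is $\varepsilon=\sum_\lambda\widehat{\xi}_{\diag(\lambda)}$, to $\sum_\lambda\xi_\lambda=\id$).

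For multiplicativity, recall that the product in $S^{Gr}_R(n,r)$ is dual to $\triangle$, i.e. $(fg)(c_{i,j})=\sum_{k\in I(n,r)}f(c_{i,k})g(c_{k,j})$. Then
\begin{align*}
\phi(fg)e_j=\sum_{i}(fg)(c_{i,j})e_i=\sum_{i}\sum_{k}f(c_{i,k})g(c_{k,j})e_i
=\sum_{k}g(c_{k,j})\Big(\sum_{i}f(c_{i,k})e_i\Big)=\sum_{k}g(c_{k,j})\,\phi(f)e_k,
\end{align*}
and the right-hand side is $\phi(f)\big(\sum_k g(c_{k,j})e_k\big)=\phi(f)\phi(g)e_j$. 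Thus $\phi(fg)=\phi(f)\phi(g)$.

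**Main obstacle.** Essentially everything reduces to routine bookkeeping once the comultiplication formula $\triangle(c_{i,j})=\sum_k c_{i,k}\otimes c_{k,j}$ is in hand; the one place that requires a little care is confirming that $\phi(f)$ actually lands in $S_R(n,r)$ rather than merely in $\End_R((R^n)^{\otimes r})$, i.e. the $\Sigma_r$-equivariance check. The cleanest way to do this is via the identification of $\Lambda(n,n;r)$ with $\Sigma_r$-orbits through $\wt$: since $\widehat{\xi}_\omega$ is constant on orbits by construction of $c_\omega$, the image $\phi(\widehat{\xi}_\omega)=\xi_\omega$ is manifestly $\Sigma_r$-invariant, and by $R$-linearity so is $\phi(f)$ for arbitrary $f$. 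With that observed, the proof is complete.
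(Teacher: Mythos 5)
Your proof is correct and follows essentially the same approach as the paper: compute $\phi(\widehat{\xi}_\omega)=\xi_\omega$ to get the $R$-module isomorphism, and verify multiplicativity by pushing the product formula $(fg)(c_{i,j})=\sum_k f(c_{i,k})g(c_{k,j})$ through the definition of $\phi$. The only cosmetic difference is that you make explicit the observation that $\phi(f)$ lands in $S_R(n,r)$ via $\Sigma_r$-equivariance, whereas the paper leaves this implicit in the identity $\phi(\widehat{\xi}_\omega)=\xi_\omega$ together with $R$-linearity.
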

	\begin{proof}
		Clearly $\phi$ maps the identity of $S^{Gr}_R\left( n,r
		\right)$ into the identity map of $\End_R\left( \left( R^n
		\right)^{\otimes r} \right)$. 
		Let $f_1$, $f_2\in
		S^{Gr}_R\left( n,r \right)$. Then the product
		(see~\cite{green} or~\cite{greencombinatorics}) of $f_1$ and
		$f_2$ is defined by 
		$$
		\left( f_1f_2 \right)\left( c_{i,j} \right) = \sum_{k\in I\left(
		n,r \right)} f_1\left( c_{i,k} \right)f_2 \left( c_{k,j}
		\right). 
		$$
		Therefore
		\begin{align*}
			\phi\left( f_1f_2 \right) e_j& = \sum_{i\in I\left( n,r
			\right)} \sum_{k\in I\left( n,r \right)} f_1\left(
			c_{i,k}
			\right) f_2\left( c_{k,j} \right) e_i. 
		\end{align*}
		On the other hand
		\begin{align*}
			\phi\left( f_1 \right)\phi\left( f_2 \right) e_j &= 
			\phi\left( f_1 \right) \sum_{k\in I\left( n,r
			\right)} f_2\left(
			c_{k,j}
			\right)e_k = \sum_{k\in I\left( n,r \right)} \sum_{i\in
			I\left( n,r \right)} f_2\left(
			c_{k,j}
			\right)f_1\left( c_{i,k} \right) e_i.
		\end{align*}
		Therefore $\phi\left( f_1f_2 \right) = \phi\left( f_1
		\right)\phi\left( f_2 \right)$. 
	We have also $\phi\left( \widehat{\xi}_{\omega} \right) = \xi_{\omega}$.
	In fact
	\begin{align*}
		\phi\left( \widehat{\xi}_{\omega} \right) e_j & =
		\sum_{i\in I\left( n,r \right)} \widehat{\xi}_{\omega}\left(
		c_{i,j} \right) e_i = \sum_{i\in I\left( n,r \right)\colon
		\left( i,j \right)\in \omega} e_i = \xi_{\omega} e_j. 
	\end{align*}
	Thus the result follows. 
	\end{proof}
	To each $c_{s,t}\in A_R\left( n \right)$ we can associate a function
	$\widetilde{c}_{s,t}\colon \GL_n\left( R \right)\to R$ defined by
	$\widetilde{c}_{s,t}(g) =
	g_{st}$, $g = \left( g_{s,t} \right)_{s,t=1}^n\in \GL_n\left( R \right)$. The correspondence
	$c_{s,t}\mapsto \widetilde{c}_{s,t}$ induces an algebra homomorphism
	$\psi$
	from $A_R\left( n \right)$ to the algebra of maps from $\GL_n\left( R
	\right)$ to $R$.  The homomorphism $\psi$ is injective if $R$ is an
	infinite field, but this is not true for arbitrary commutative rings or
	even finite fields. We write
	$\widetilde{c}_{\omega}$  for $\psi\left( c_{\omega} \right)$,
	$\omega\in \Lambda\left( n,n;r \right)$. Then for all $g\in \GL_n\left(
	R \right)$ and $\left( i,j \right)\in \omega$ we have  
\begin{align}
	\label{eq:comega}
	\widetilde{c}_{\omega}\left( g \right)& = \prod_{s,t=1}^n
	g_{st}^{\omega_{st}},&
	\widetilde{c}_{i,j}\left( g \right)  =
	\prod_{q=1}^r g_{i_q,j_q} . 
\end{align}

Next we give an overview of the functor of divided powers and explain
the 
relation between divided powers and principal projective modules over the Schur
algebra $S_R\left( n,r \right)$. 

 We start by recalling the definition of the algebra of divided powers
$D\left( M \right)$ associated to an $R$-module $M$. 
A good reference on this subject is~\cite{roby}. 

We denote by
$\mathcal{D}\left( M \right)$ the free algebra
over $R$ generated by variables $X_{\left(m,k\right)}$, $m\in M$ and $k\in\N$. Now $D\left(
M \right)$ is defined as the quotient of $\mathcal{D}\left( M \right)$ by the ideal generated by
\begin{align}
	\nonumber X_{\left(m,0\right)} &- 1 && \left( m\in M \right)\\
\nonumber 	X_{\left( a m, k \right)} &- a^k X_{\left( m,k \right)} &&
	\left( a\in R,\ m\in M,\ k\ge 0  \right)\\
	\label{eq:ideal}	X_{\left( m,k \right)} X_{\left( m,l \right)} &- \binom{k+l}{k}
	X_{\left( m,k+l \right)} && \left( m\in M,\ k,\ l\ge 0 \right)\\
\nonumber		X_{\left( m_1+m_2,l \right)} &- \sum_{k=0}^l X_{\left( m_1,k
	\right)}X_{\left(m_2,l-k\right)}  && \left( m_1,\ m_2\in M,\ l\ge 0 \right). 
\end{align}

We will denote the image of $X_{\left( m,k \right)}$ in $D\left( M
\right)$ by $m^{\left( k \right)}$. 
Then, besides the relations determined by \eqref{eq:ideal}, it also holds
\begin{align*}
	0^{\left( k \right)} &=0, && k\ge 1\\
	m^{\left( k_1 \right)}\dots m^{\left( k_s \right)}
	&=\binom{k_1+\dots+k_s}{k_1,\dots,k_s} m^{ \left( k_1+\dots+k_s
	\right) } ,&& m\in M;\ k_1,\dots,k_s\ge 0\\[2ex]
	\left( m_1+\dots + m_s \right)^{\left( k \right)}& = \sum_{k_1+\dots +
	k_s = k} m_1^{\left( k_1 \right)}\dots m_s^{\left( k_s \right)} ,&& m_1,\dots,m_s\in M; k\ge 0. 
\end{align*}
If $M$ and $N$ are two $R$-modules and $f\colon M\to N$ is a homomorphism of
$R$-modules, we can define an $R$-algebra homomorphim $D\left( f
\right)\colon D\left( M \right)\to D\left( N \right)$  by $D\left( f
\right)\left( m^{\left( k \right)} \right) = \left(f\left( m \right)\right)^{\left( k
\right)}$, $m\in M$, $k\ge 0$.  Note that the rings $\mathcal{D}\left(
M \right)$ are graded, with $\mathop{deg}\left( X_{\left( m,k \right)} \right)=
k$, $m\in M$, $k\ge 0$. As all the relations \eqref{eq:ideal} are homogeneous,
the ring $D\left( M \right)$ is graded as well. Now the map $D\left( f \right)$
preserves the grading for any map of
$R$-modules $f$. Therefore $D$ is a functor from the category of $R$-modules to
the category of graded $R$-algebras. We denote by $D_k\left( M \right)$ the
$k$-th homogeneous component of $D\left( M \right)$. Given a homomorphism of
$R$-modules $f\colon M\to N$ we define $D_k\left( f
\right)$  to be the restriction of $D\left( f \right)$ to $D_k\left( M
\right)$. In this way we obtain the endofunctor $D_k$ on the category of
$R$-modules.  

Define the map $\tau_k\colon \GL\left( M \right)\to \End_R\left( D_k\left( M
\right)\right)$ by $\tau_k\left( f \right) := D_k\left( f \right)$. This map is
obviously multiplicative and thus extends to a  representation $\tau_k\colon
R\GL\left( M \right) \to \End_R\left( D_k\left( M \right)
\right)$ of
the group $\GL\left( M \right)$.
Thus we get a structure of $\GL\left( M \right)$-module on $D_k\left( M
\right)$. As the category of $\GL\left( M \right)$-modules is monoidal we can
define
for every $\lambda = \left( \lambda_1,\dots, \lambda_n \right)\in \Lambda\left(
n;r \right)$ the $\GL\left(
M \right)$-module $D_\lambda\left( M \right)$ by
$$
D_\lambda\left( M \right) = D_{\lambda_1}\left( M \right)\otimes_R \dots
\otimes_R D_{\lambda_n}\left( M \right).
$$
Now we shall give a more explicit description of $D_\lambda\left( M \right)$ in
the case $M = R^n$.

Recall that $\left\{ e_1,\dots, e_n
\right\}$ denotes the standard basis of $R^n$. By Theorem~IV.2 in~\cite{roby} 
$$
\left\{\, e_1^{\left( k_1 \right)}\dots e_n^{\left( k_n \right)} \,\middle|\,
k_t\in \N, \sum_{t=1}^n k_t = k \right\}
$$
is a basis of $D_k\left( R^n\right)$ for $k\ge 0$. 
Thus
the set
	$$
	\left\{\, e^{\left( \pi \right)} \,\middle|\, \pi\in \Lambda\left( n,n;r
	\right), \pi^1 = \lambda\right\}, 
	$$
	where $e^{\left( \pi \right)} = \bigotimes_{t=1}^n \prod_{s=1}^n
	e_s^{\left( \pi_{st} \right)}$, 
	is an $R$-basis of the $R$-module $D_\lambda\left( R^n \right)$. 
\begin{proposition}
	\label{actiondivided}
	Let $\lambda\in \Lambda\left( n;r \right)$. Then the action of
	$\GL_n\left( R \right)$ on $D_\lambda\left( R^n \right)$ is given by 
	\begin{align*}
		ge^{\left( \pi \right)} &= \sum_{\theta\in \Lambda\left( n,n,n;r
		\right)\colon \theta^1 = \pi} \left[ \theta
		\right]\widetilde{c}_{\theta^3}\left( g \right)e^{\left( \theta^2 \right)},& \pi\in
		\Lambda\left( n,n;r \right),\ \pi^1 = \lambda,\ g\in \GL_n\left(
		R \right). 
	\end{align*}
\end{proposition}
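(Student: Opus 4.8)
The plan is to compute $g\,e^{(\pi)}$ directly from the definition of the $\GL_n(R)$-action and the divided-power relations recalled above, and then to recognise the answer as the sum on the right-hand side.

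Recall that $g$ acts on $D_\lambda(R^n)=D_{\lambda_1}(R^n)\otimes_R\cdots\otimes_R D_{\lambda_n}(R^n)$ diagonally (the $\GL_n(R)$-module structure on a tensor product being the diagonal one), and on each factor $D_{\lambda_t}(R^n)$ through the $R$-algebra homomorphism $D(g)\colon D(R^n)\to D(R^n)$ determined by $D(g)(m^{(k)})=(gm)^{(k)}$. Since $D(g)$ is multiplicative and $g e_t=\sum_{s=1}^n \widetilde{c}_{s,t}(g)\,e_s$, writing $e^{(\pi)}=\bigotimes_{q=1}^n\prod_{t=1}^n e_t^{(\pi_{tq})}$ (a mere relabelling of the basis description given before the statement) one gets
\[
  g\,e^{(\pi)}=\bigotimes_{q=1}^n\prod_{t=1}^n (g e_t)^{(\pi_{tq})}
  =\bigotimes_{q=1}^n\prod_{t=1}^n\Bigl(\textstyle\sum_{s=1}^n \widetilde{c}_{s,t}(g)\,e_s\Bigr)^{(\pi_{tq})} .
\]
I would then expand each inner factor using the identities $(am)^{(k)}=a^k m^{(k)}$ and $(m_1+\cdots+m_n)^{(k)}=\sum_{k_1+\cdots+k_n=k}m_1^{(k_1)}\cdots m_n^{(k_n)}$: for fixed $t,q$,
\[
  \Bigl(\textstyle\sum_{s=1}^n \widetilde{c}_{s,t}(g)\,e_s\Bigr)^{(\pi_{tq})}
  =\sum_{\substack{(\theta_{stq})_{s=1}^n\\ \sum_s\theta_{stq}=\pi_{tq}}}\ \prod_{s=1}^n \widetilde{c}_{s,t}(g)^{\,\theta_{stq}}\,e_s^{(\theta_{stq})} .
\]

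Next I would multiply over $t$ inside each tensor factor and tensor over $q$. As $s,t,q$ range over $\mathbf{n}$, the summation data $(\theta_{stq})$ assembles into an element $\theta\in\Lambda(n,n,n;r)$, and the constraints $\sum_s\theta_{stq}=\pi_{tq}$ say precisely that $\theta^1=\pi$. For a fixed such $\theta$, the scalar coefficient is $\prod_{s,t,q}\widetilde{c}_{s,t}(g)^{\theta_{stq}}=\prod_{s,t}\widetilde{c}_{s,t}(g)^{(\theta^3)_{st}}=\widetilde{c}_{\theta^3}(g)$ by \eqref{eq:comega}. Inside the $q$-th tensor factor one then collects the factors $e_s^{(\theta_{stq})}$ with equal $s$, using $m^{(k_1)}\cdots m^{(k_n)}=\binom{k_1+\cdots+k_n}{k_1,\dots,k_n}m^{(k_1+\cdots+k_n)}$ in the commutative algebra $D(R^n)$; the multinomial coefficients so produced multiply to $\prod_{s,t}\binom{(\theta^2)_{st}}{\theta_{s1t},\dots,\theta_{snt}}=[\theta]$, and the divided powers left over are $\bigotimes_q\prod_s e_s^{((\theta^2)_{sq})}=e^{(\theta^2)}$. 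Assembling, $g\,e^{(\pi)}=\sum_{\theta:\ \theta^1=\pi}[\theta]\,\widetilde{c}_{\theta^3}(g)\,e^{(\theta^2)}$, which is the asserted formula. It remains only to observe that $\theta^1=\pi$ forces $(\theta^2)^1=\pi^1=\lambda$, so each $e^{(\theta^2)}$ occurring is indeed one of the basis elements of $D_\lambda(R^n)$ listed before the statement (and the right-hand side lies in $D_\lambda(R^n)$, as it must).

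I expect the only real difficulty to be bookkeeping: keeping the three indices of $\theta$ in order and matching the constraint, the collected scalars, and the collected multinomial coefficients with the definitions of $\theta^1$, $\theta^2$, $\theta^3$ and $[\theta]$. The underlying algebra is a routine, term-by-term application of the divided-power identities, with no conceptual obstacle.
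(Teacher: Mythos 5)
Your proof is correct and follows essentially the same route as the paper's: expand $(g e_t)^{(\pi_{tq})}$ via the divided-power identities, re-index the resulting sum by $\theta\in\Lambda(n,n,n;r)$ with $\theta^1=\pi$, and then identify the scalar as $\widetilde{c}_{\theta^3}(g)$ and the remaining divided-power product as $[\theta]\,e^{(\theta^2)}$. The only cosmetic difference is that you phrase the re-indexing directly, whereas the paper makes the bijection between the collection of local compositions $\nu\colon\bfn\times\bfn\to\Lambda(n)$ and $Y(\pi)=\{\theta:\theta^1=\pi\}$ explicit.
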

\begin{proof}
	Let $g=\left( g_{st} \right)_{s,t=1}^n \in \GL_n\left( R \right)$ and
	$\pi\in \Lambda\left( n,n;r \right)$ such that $\pi^1 =\lambda$. Then
	\begin{align*}
		g e^{\left( \pi \right)} &= \bigotimes_{t=1}^n \prod_{s=1}^n
		\left( ge_s \right)^{\left(\pi_{st}\right)} = \bigotimes_{t=1}^n
		\prod_{s=1}^n \left( \sum_{q=1}^n g_{qs} e_q \right)^{\left(
		\pi_{st} \right)}\\
		&= \bigotimes_{t=1}^n \prod_{s=1}^n
		\sum_{\nu\in \Lambda\left( n;\pi_{st} \right)}
		\prod_{q=1}^{ n} g_{q,s}^{\nu_q} e_q^{\left( \nu_q \right)}
		\\& =
		\sum_{\nu\colon \bfn\times \bfn \to \Lambda\left(
		n
		\right)}  \bigotimes_{t=1}^n \prod_{s,q=1}^n
		g^{\nu\left( s,t \right)_q}_{q,s} e_q^{\left(\nu\left( s,t
		\right)_q\right)},
	\end{align*}
	where the summation is over the set $X\left( \pi \right)$ of functions
	\begin{align*}	
		\nu\colon \bfn\times \bfn
		\to \Lambda\left( n \right) = \bigcup_{r\ge 0 }\Lambda\left( n;r
		\right)
		\end{align*}
		such that $\nu\left( s,t \right)\in
	\Lambda\left( n;\pi_{st} \right)$. 
	There is a one-to-one correspondence between $X\left( \pi \right)$ and
	the set $Y\left(\pi  \right) :=\left\{\, \theta\in \Lambda\left( n,n,n;r
	\right) \,\middle|\, \theta^1 = \pi
	\right\}$. 
	In fact, let $\theta\in Y\left( \pi \right)$. Then we can define
	$\nu\colon \bfn\times\bfn \to \Lambda\left( n \right)$ by
	$$
	\left( s,t \right)\mapsto \left( \theta_{1st}, \dots, \theta_{nst}
	\right).
	$$
	Since $\left( \theta^1 \right)_{st} = \pi_{st}$ we get $\nu\left(
	s,t \right)\in \Lambda\left( n;\pi_{st} \right)$ for all $1\le s,t\le
	n$. 
	Now let $\nu\in X\left( \pi \right)$. Then we can define $\theta\in
	\Lambda\left( n,n,n;r \right)$ by $\theta_{qst} = \nu\left( s,t
	\right)_q$. Since $\nu\left( s,t \right)\in \Lambda\left(
	n;\pi_{st}
	\right)$ we get that $\theta^1 = \pi$. Thus $\theta\in Y\left( \pi
	\right)$. It is easy to see that these constructions are mutually
	inverse. 
	Therefore
	\begin{align*}
		ge^{\left( \pi \right)} & = \sum_{\theta\in \Lambda\left(
		n,n,n;r
		\right)\colon \theta^1 = \pi}\ \ \ \bigotimes_{t=1}^n \prod_{s,q=1}^n g_{q,s}^{\theta_{qst}} e_{q}^{\left(
		\theta_{qst}
		\right)}.
	\end{align*}
	Since $R$ is a commutative ring it is left to show that for every
	$\theta\in Y\left( \pi \right)$ we have
	\begin{align*}
		\prod_{t,s,q=1}^n
		g_{qs}^{\theta_{qst}} &= \widetilde{c}_{\theta^3}\left( g
		\right), &
		\bigotimes_{t=1}^n \prod_{s,q=1}^n
		e_{q}^{\left(\theta_{qst}\right)} &= \left[ \theta \right]e^{\left( \theta^2
		\right)}.
	\end{align*}
	For the first formula we have
	\begin{align*}
			\prod_{t,s,q=1}^n
		g_{qs}^{\theta_{qst}} = 
  \prod_{s,q=1}^n g_{qs}^{\sum_{t=1}^n \theta_{qst}}
=  \prod_{s,q=1}^n g_{qs}^{\left( \theta^3 \right)_{qs}}
\stackrel{\eqref{eq:comega}}{\Relbar} \widetilde{c}_{\theta^3}\left( g \right). 
	\end{align*}
For the second identity we fix $t\in \bfn$ and compute the product
\begin{align*}
	\prod_{s,q=1}^n e_q^{\left( \theta_{qst} \right)} = 
	\prod_{q=1}^n \left(\prod_{s=1}^n e_q^{\left( \theta_{qst}
	\right)}\right) = 
	\prod_{q=1}^n \binom{\left( \theta^2 \right)_{qt}}{\theta_{q1t}, \dots,
	\theta_{qnt}} e_q^{\left(\left( \theta^2 \right)_{qt}\right)}.
\end{align*}
Now the result follows.
\end{proof}
The group $\GL_n\left( R \right)$ acts naturally on $R^n$, by multiplication.
Thus $\GL_n\left( R \right)$ acts on $\left( R^n \right)^{\otimes r}$
diagonally. Denote by $\rho_{n,r}$ the corresponding representation of
$\GL_n\left( R \right)$. 
 Since for every $g\in \GL_n\left( R
\right)$ the endomorphism $\rho_{n,r}\left( g \right)$ commutes with the action
of $\Sigma_r$ on $\left( R^n \right)^{\otimes r}$, we get that $\im\left(
\rho_{n,r}
\right)\subset S_R\left(n,r \right)$.

\begin{proposition}
	\label{imagerho}
	Let $g\in\GL_n\left( R \right)$. Then $\rho_{n,r}\left( g \right) =
	\sum_{\omega\in\Lambda\left( n,n;r \right)} \widetilde{c}_{\omega}\left( g
	\right) \xi_{\omega}$. 
\end{proposition}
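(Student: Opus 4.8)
The plan is to evaluate $\rho_{n,r}(g)$ on the standard basis $\left\{\, e_j \,\middle|\, j\in I(n,r) \right\}$ of $\left( R^n \right)^{\otimes r}$, identify the result as an explicit linear combination of the elementary endomorphisms $e_{i,j}$, and then regroup that combination according to $\Sigma_r$-orbits.

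First I would compute the action of $\rho_{n,r}(g)$ on a fixed $e_j = e_{j_1}\otimes\dots\otimes e_{j_r}$. Since $\GL_n(R)$ acts diagonally, $\rho_{n,r}(g)(e_j) = (g e_{j_1})\otimes\dots\otimes(g e_{j_r})$; expanding each factor as $g e_{j_q} = \sum_{s=1}^n g_{s,j_q}\, e_s$ and using multilinearity of $\otimes_R$ gives $\rho_{n,r}(g)(e_j) = \sum_{i\in I(n,r)}\bigl(\prod_{q=1}^r g_{i_q,j_q}\bigr) e_i$. By the second formula in~\eqref{eq:comega} the scalar $\prod_{q=1}^r g_{i_q,j_q}$ is exactly $\widetilde{c}_{i,j}(g)$, so $\rho_{n,r}(g)(e_j) = \sum_{i\in I(n,r)} \widetilde{c}_{i,j}(g)\, e_i$ for every $j\in I(n,r)$.

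Next, recalling that $e_{i,j} e_k = \delta_{jk} e_i$, the displayed identity says precisely that, as an element of $\End_R\left( \left( R^n \right)^{\otimes r} \right)$, one has $\rho_{n,r}(g) = \sum_{(i,j)\in I(n,r)\times I(n,r)} \widetilde{c}_{i,j}(g)\, e_{i,j}$. Finally I would group this sum by the $\Sigma_r$-orbits $\omega = \wt(i,j)\in\Lambda(n,n;r)$: since $\widetilde{c}_{i,j}(g)$ depends only on $\omega$ — indeed $\widetilde{c}_{i,j}(g) = \widetilde{c}_\omega(g) = \prod_{s,t=1}^n g_{st}^{\omega_{st}}$ by~\eqref{eq:comega} — the scalar factors out of each inner sum, giving $\rho_{n,r}(g) = \sum_{\omega\in\Lambda(n,n;r)} \widetilde{c}_\omega(g) \sum_{(i,j)\in\omega} e_{i,j} = \sum_{\omega\in\Lambda(n,n;r)} \widetilde{c}_\omega(g)\, \xi_\omega$, which is the assertion; this also re-confirms the earlier observation that $\rho_{n,r}(g)\in S_R(n,r)$, consistent with $\xi_\omega\in S_R(n,r)$.

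There is no substantial obstacle here: the argument is a direct computation. The only point that deserves a moment of care is the $\Sigma_r$-invariance of the coefficient $\widetilde{c}_{i,j}(g)$, i.e. that it depends only on $\wt(i,j)$, but this is immediate from the commutativity of $R$ together with the definition of $c_\omega$ and formula~\eqref{eq:comega}. Thus the ``hard part'' is purely bookkeeping.
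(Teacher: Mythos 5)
Your proposal is correct and follows essentially the same route as the paper: expand $\rho_{n,r}(g)$ on a basis vector, identify the coefficient $\prod_q g_{i_q,j_q}$ with $\widetilde{c}_{i,j}(g)$ via \eqref{eq:comega}, and regroup by $\Sigma_r$-orbits to recover $\sum_\omega \widetilde{c}_\omega(g)\,\xi_\omega$. The only cosmetic difference is that you pass explicitly through the $e_{i,j}$ expansion in $\End_R\left(\left(R^n\right)^{\otimes r}\right)$, whereas the paper rewrites the inner sum directly as $\xi_\omega e_i$ before invoking that $\{e_i\}$ is a basis.
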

\begin{proof}
	Let $i\in I\left( n,r \right)$. Then we have
	\begin{align*}
		ge_i&  = ge_{i_1} \otimes \dots \otimes ge_{i_r}
		\\& =
		\sum_{j\in I\left( n,r \right)} \left( g_{j_1,i_1}\dots
		g_{j_r,i_r} \right) e_{j_1}\otimes \dots \otimes e_{j_r} 
		\\& \stackrel{\eqref{eq:comega}}{\Relbar}\sum_{j\in I\left( n,r
		\right)} \widetilde{c}_{\wt\left( j,i
		\right)} \left( g \right) e_j = \sum_{\omega\in \Lambda\left(
		n,n;r \right)} \widetilde{c}_\omega\left( g \right) \sum_{j\colon \left( j,i
		\right)\in \omega} e_j = \sum_{\omega\in \Lambda\left( n,n;r
		\right)} \widetilde{c}_\omega\left( g \right) \xi_\omega e_i.
	\end{align*}
	As the set $\left\{\, e_i \,\middle|\, i\in I\left( n,r \right)
	\right\}$ is a basis of $\left( R^n \right)^{\otimes r}$, we see that
	$\rho_{n,r}\left( g \right) = \sum_{\omega\in \Lambda\left( n,n;r
	\right)} \widetilde{c}_\omega\left( g \right)\xi_{\omega}$. 
\end{proof}

\begin{theorem}
	\label{isodivided}
	Let $\lambda\in \Lambda\left( n,r \right)$ and
	 consider $S_R\left( n,r \right)\xi_{\lambda}$ as a $\GL_n\left( R
	 \right)$-module via the homomorphism $\rho_{n,r}$. 
Then the map 
\begin{align*}
	\psi\colon D_{\lambda}\left( R^n \right) & \to S_R\left( n,r
	\right)\xi_{\lambda}\\
	e^{\left( \pi \right)}& \mapsto \xi_{\pi}
\end{align*}
is an isomorphism of $\GL_n\left( R \right)$-modules. 
\end{theorem}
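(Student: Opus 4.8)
The plan is to first establish that $\psi$ is an isomorphism of $R$-modules, and then to upgrade it to an isomorphism of $\GL_n\left( R \right)$-modules by matching the explicit formulas for the two actions.

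For the $R$-module statement I would argue as follows. We already know that the set $\left\{\, e^{\left( \pi \right)} \,\middle|\, \pi\in \Lambda\left( n,n;r \right),\ \pi^1 = \lambda \right\}$ is an $R$-basis of $D_\lambda\left( R^n \right)$. On the other side, Remark~\ref{prodzero} gives $\xi_\omega\xi_\lambda = \xi_\omega$ whenever $\omega^1 = \lambda$ and $\xi_\omega\xi_\lambda = 0$ otherwise; since $\left\{\, \xi_\omega \,\middle|\, \omega\in \Lambda\left( n,n;r \right) \right\}$ is an $R$-basis of $S_R\left( n,r \right)$, it follows that $S_R\left( n,r \right)\xi_\lambda$ is the free $R$-module on $\left\{\, \xi_\pi \,\middle|\, \pi\in \Lambda\left( n,n;r \right),\ \pi^1 = \lambda \right\}$. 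Hence $\psi$ carries an $R$-basis bijectively onto an $R$-basis, so it is well defined and is an isomorphism of $R$-modules.

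For equivariance, fix $g\in \GL_n\left( R \right)$ and $\pi\in \Lambda\left( n,n;r \right)$ with $\pi^1 = \lambda$. On the Schur-algebra side I would use Proposition~\ref{imagerho} to expand $\rho_{n,r}\left( g \right)$ and then the multiplication formula of Proposition~\ref{mult} to compute
\begin{align*}
	g\cdot \xi_\pi = \rho_{n,r}\left( g \right)\xi_\pi &= \sum_{\omega\in \Lambda\left( n,n;r \right)} \widetilde{c}_\omega\left( g \right)\,\xi_\omega\xi_\pi \\
	&= \sum_{\omega} \widetilde{c}_\omega\left( g \right)\sum_{\substack{\theta:\ \theta^3 = \omega\\ \theta^1 = \pi}} \left[ \theta \right]\xi_{\theta^2} = \sum_{\substack{\theta\in \Lambda\left( n,n,n;r \right)\\ \theta^1 = \pi}} \left[ \theta \right]\widetilde{c}_{\theta^3}\left( g \right)\xi_{\theta^2},
\end{align*}
where the last equality just uses the constraint $\theta^3 = \omega$ to perform the sum over $\omega$. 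Applying $\psi$ to the formula for $g e^{\left( \pi \right)}$ in Proposition~\ref{actiondivided} yields the very same expression,
\begin{align*}
	\psi\left( g e^{\left( \pi \right)} \right) &= \sum_{\substack{\theta\in \Lambda\left( n,n,n;r \right)\\ \theta^1 = \pi}} \left[ \theta \right]\widetilde{c}_{\theta^3}\left( g \right)\psi\left( e^{\left( \theta^2 \right)} \right) = \sum_{\substack{\theta\in \Lambda\left( n,n,n;r \right)\\ \theta^1 = \pi}} \left[ \theta \right]\widetilde{c}_{\theta^3}\left( g \right)\xi_{\theta^2}.
\end{align*}
Comparing the two, $\psi\left( g e^{\left( \pi \right)} \right) = g\cdot\psi\left( e^{\left( \pi \right)} \right)$ on basis vectors, hence on all of $D_\lambda\left( R^n \right)$ by linearity; this proves $\psi$ is a homomorphism of $\GL_n\left( R \right)$-modules, and combined with the previous paragraph it is an isomorphism.

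Frankly, there is no real obstacle here: the whole proof is a routine identification of two combinatorial formulas that were set up precisely so that this would work. The one point meriting attention is the bookkeeping of the index set of the double sum — after summing over $\omega$, the triples $\theta$ with $\theta^3 = \omega$ and $\theta^1 = \pi$ run over exactly the triples with $\theta^1 = \pi$, which is the indexing set appearing in Proposition~\ref{actiondivided}. One may also note in passing that $\theta^1 = \pi$ together with $\pi^1 = \lambda$ forces $\left( \theta^2 \right)^1 = \left( \theta^1 \right)^1 = \lambda$, so every $\xi_{\theta^2}$ that occurs really does lie in $S_R\left( n,r \right)\xi_\lambda$ and every $e^{\left( \theta^2 \right)}$ in $D_\lambda\left( R^n \right)$, consistently with the domain and codomain of $\psi$.
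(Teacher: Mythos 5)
Your proof is correct and follows essentially the same route as the paper: both sides are expanded via Proposition~\ref{actiondivided} and Propositions~\ref{imagerho},~\ref{mult}, and the resulting sums over $\theta$ with $\theta^1=\pi$ are seen to agree. The only differences are cosmetic — you spell out the $R$-module isomorphism step via Remark~\ref{prodzero} where the paper calls it clear, and you compute the Schur-algebra side before the divided-powers side.
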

\begin{proof}
	It is clear that $\psi$ is an isomorphism of $R$-modules. Thus it is
	enough to show that $\psi$ commutes with the action of $\GL_n\left( R
	\right)$. 
	 Let $g\in \GL_n\left( R \right)$ and $\pi\in \Lambda\left(
	n,n;r \right)$, $\pi^1 = \lambda$. Then 
	by Proposition~\ref{actiondivided} we have
	\begin{align*}
		\psi\left( ge^{\left(\pi\right)} \right) = \sum_{\theta\in \Lambda\left(
		n,n,n;r
		\right)\colon \theta^1 = \pi} \left[ \theta
		\right]\widetilde{c}_{\theta^3}\left( g \right) \psi\left( e^{\left(
		\theta^2
		\right)} \right) =  \sum_{\theta\in \Lambda\left(
		n,n,n;r
		\right)\colon \theta^1 = \pi} \left[ \theta
		\right]\widetilde{c}_{\theta^3}\left( g \right)\xi_{\theta^2} . 
	\end{align*}
	On the other hand by Proposition~\ref{imagerho} and
	Proposition~\ref{mult}
	\begin{align*}
		g\psi\left( e^{\left( \pi \right)} \right) & = \left(
		\sum_{\omega\in \Lambda\left( n,n;r \right)} \widetilde{c}_{\omega}\left( g
		\right)\xi_{\omega}
		\right) \xi_{\pi} \\[2ex]& = \sum_{\omega\in \Lambda\left( n,n;r
		\right)} \widetilde{c}_\omega\left( g \right) \sum_{\theta\in \Lambda\left(
		n,n,n;r \right)\colon \theta^3= \omega, \ \theta^1 = \pi}
		\left[ \theta \right]\xi_{\theta^2}
		\\[2ex]&
		= \sum_{\theta\in \Lambda\left( n,n,n;r
		\right)\colon \theta^1 = \pi}\left[ \theta \right] \widetilde{c}_{\theta^3}\left( g
		\right) \xi_{\theta^2}. 
	\end{align*}
\end{proof}

\bibliography{schur}
\bibliographystyle{amsalpha}

\end{document}